\documentclass[12pt]{amsart}
\usepackage{amsmath,amssymb,latexsym,cancel,rotating}

\textwidth15.1cm \textheight21cm \headheight12pt
\oddsidemargin.4cm \evensidemargin.4cm \topmargin0.5cm

\addtolength{\marginparwidth}{-13mm}

\numberwithin{equation}{section}

\newtheorem{theorem}{Theorem}[section]
\newtheorem{proposition}[theorem]{Proposition}

\newtheorem{corollary}[theorem]{Corollary}
\newtheorem{lemma}[theorem]{Lemma}

\theoremstyle{definition}

\newtheorem{remark}[theorem]{Remark}

\newtheorem{definition}[theorem]{Definition}

\input xy
\xyoption{poly}
\xyoption{2cell}
\xyoption{all}

\def\ZZ{\mathbb{Z}}

\def\QQ{\mathbb{Q}}

\def\Acal{\mathcal{A}}

\def\Fcal{\mathcal{F}}

\def\Tcal{\mathcal{T}}

\def\dimb{\underbar{\text{dim}}}

\def\half{\frac{1}{2}}

\renewcommand{\eqref}[1]{{\rm (\ref{#1})}}

\begin{document}

\title[Bases of  the quantum cluster algebra of the Kronecker quiver]
{Bases of  the quantum cluster algebra of the Kronecker quiver}

\author{Ming Ding and Fan Xu}
\address{Institute for advanced study\\
Tsinghua University\\
Beijing 100084, P.~R.~China} \email{m-ding04@mails.tsinghua.edu.cn
(M.Ding)}
\address{Department of Mathematical Sciences\\
Tsinghua University\\
Beijing 100084, P.~R.~China} \email{fanxu@mail.tsinghua.edu.cn
(F.Xu)}


\thanks{Fan Xu was partially supported by
the Ph.D. Programs Foundation of Ministry of Education of China (No.
200800030058)}

\subjclass[2000]{Primary  16G20, 20G42; Secondary  14M17}

\keywords{quantum cluster algebra,  $\mathbb{Z}[q^{\pm
\frac{1}{2}}]-$basis, positivity}

\maketitle

\begin{abstract} We construct bar-invariant $\mathbb{Z}[q^{\pm \frac{1}{2}}]-$bases
of the quantum cluster algebra of the Kronecker quiver which are
quantum analogues of the canonical basis, semicanonical basis and
dual semicanonical basis of the cluster algebra of the Kronecker
quiver in the sense of \cite{sherzel},\cite{calzel} and \cite{gls}
respectively. As a byproduct, we prove the positivity of the
elements in these bases.
\end{abstract}


\section{Introduction}
Cluster algebras were introduced by S. Fomin and A. Zelevinsky
\cite{ca1}\cite{ca2} in order to study total positivity in algebraic
groups and the specialization  of canonical bases of quantum groups
at $q=1$. The study of $\mathbb{Z}$-bases of cluster algebras is
important. There are many results involving the construction of
$\mathbb{Z}$-bases of cluster algebras (for example, see
\cite{sherzel} and \cite{calzel} for cluster algebras of rank 2,
\cite{CK2005} for finite type, \cite{Dup} for type $\widetilde{A}$,
\cite{gc} for $\widetilde{A}_{2}^{(1)}$, \cite{DXX} for affine type
and \cite{gls} for $Q$ without oriented cycles). As a quantum analog
of cluster algebras, quantum cluster algebras were defined by A.
Berenstein and A. Zelevinsky in \cite{berzel} in order to study
canonical bases. A quantum cluster algebra   is generated by a set
of generators called \textit{cluster variables} inside an ambient
skew-field $\mathcal{F}$. Under the specialization $q=1,$ the
quantum cluster algebras are exactly cluster algebras.

Naturally, one may hope to construct $\mathbb{Z}[q^{\pm
\frac{1}{2}}]-$bases for quantum cluster algebras and further
quantum analogues of bases of the corresponding cluster algebras. In
this short note, we deal with the case of the quantum cluster
algebra of the Kronecker quiver and construct various bar-invariant
$\mathbb{Z}[q^{\pm \frac{1}{2}}]-$bases by applying the
$q$-deformation of the Caldero-Chapoton formula defined in \cite
{rupel} and  the method in \cite{sherzel}. Under the specialization
$q=1$, these $\mathbb{Z}[q^{\pm \frac{1}{2}}]-$bases are exactly the
canonical basis, semicanonical basis and dual semicanonical basis of
the corresponding cluster algebra in the sense of
\cite{sherzel},\cite{calzel} and \cite{gls} respectively. As a
byproduct, we prove the positivity of the elements in these bases.

Recently, in \cite{lampe}, the author attached to certain element
$w$ in Weyl group a subalgebra $U_q^{+}(w)$ of the positive part
$U_q(n)$ of the universal enveloping algebra of a Kac-Moody Lie
algebra of type $\widetilde{A}_1^{(1)}$. The author proved that
$U_q^{+}(w)$ is a quantum cluster algebra in the sense of
Berenstein-Zelevinsky and gave explicit formulae for the cluster
variables. Note that the cluster variables are some elements of
$q$-deformation of dual canonical basis elements of $U_q^{+}(w)$.
However it is not clear whether cluster monomials belong to the dual
canonical basis. Thus comparing these $\mathbb{Z}[q^{\pm
\frac{1}{2}}]-$bases constructed in this note with the dual
canonical basis of $U_q^{+}(w)$ becomes an interesting thing.

\section{Preliminaries}
\subsection{Quantum cluster algebras}We begin with  some of the terminology related to quantum
cluster algebras.  One can refer to \cite{berzel} for more details.
Let $L$ be a lattice of rank $m$ and $\Lambda:L\times L\to \ZZ$ a
skew-symmetric bilinear form. We will need a formal variable $q$ and
consider the ring of integer Laurent polynomials $\ZZ[q^{\pm1/2}]$.
Define the \textit{based quantum torus} associated to the pair
$(L,\Lambda)$ to be the $\ZZ[q^{\pm1/2}]$-algebra $\mathcal{T}$ with
a distinguished $\ZZ[q^{\pm1/2}]$-basis $\{X^e: e\in L\}$ and the
multiplication given by
\[X^eX^f=q^{\Lambda(e,f)/2}X^{e+f}.\]
It is easy to see  that $\Tcal$ is associative and the basis
elements satisfy the following relations:
\[X^eX^f=q^{\Lambda(e,f)}X^fX^e,\ X^0=1,\ (X^e)^{-1}=X^{-e}.\]  It is known that $\Tcal$ is an Ore domain, i.e.,   is contained in its
skew-field of fractions $\Fcal$.  The quantum cluster algebra
 will be defined as a
$\ZZ[q^{\pm1/2}]$-subalgebra of $\Fcal$.

A \textit{toric frame} in $\Fcal$ is a map $M: \ZZ^m\to \Fcal
\setminus \{0\}$ of the form \[M({\bf c})=\varphi(X^{\eta({\bf
c})})\] where $\varphi$ is an automorphism of $\Fcal$ and $\eta:
\ZZ^m\to L$ is an  isomorphism of lattices.  By the definition, the
elements $M({\bf c})$ form a $\ZZ[q^{\pm1/2}]$-basis of the based
quantum torus $\Tcal_M:=\varphi(\Tcal)$ and satisfy the following
relations:
\[M({\bf c})M({\bf d})=q^{\Lambda_M({\bf c},{\bf d})/2}M({\bf c}+{\bf d}),\
M({\bf c})M({\bf d})=q^{\Lambda_M({\bf c},{\bf d})}M({\bf d})M({\bf c}),\]
\[ M({\bf 0})=1,\ M({\bf c})^{-1}=M(-{\bf c}),\]
where $\Lambda_M$ is the skew-symmetric bilinear form on $\ZZ^m$
obtained from the lattice isomorphism $\eta$.  Let $\Lambda_M$ also
denote the skew-symmetric $m\times m$ matrix defined by
$\lambda_{ij}=\Lambda_M(e_i,e_j)$ where $\{e_1, \ldots, e_m\}$ is
the standard basis of $\ZZ^m$.  Given a toric frame $M$, let
$X_i=M(e_i)$.  Then we have
$$\Tcal_M=\QQ(q^{1/2})\langle X_1^{\pm 1}, \ldots,
X_m^{\pm1}:X_iX_j=q^{\lambda_{ij}}X_jX_i\rangle.$$  An easy
computation shows that:
\[M({\bf c})=q^{\frac{1}{2}\sum_{i<j}
c_ic_j\lambda_{ji}}X_1^{c_1}X_2^{c_2}\cdots X_m^{c_m}=:X^{({\bf c})}
\ \ \ ({\bf c}\in\ZZ^m).\]

Let $\Lambda$ be an $m\times m$ skew-symmetric matrix and let
$\tilde{B}$ be an $m\times n$ matrix, $n\le m$.  We call the pair
$(\Lambda, \tilde{B})$ \textit{compatible} if
$\tilde{B}^T\Lambda=(D|0)$ is an $n\times m$ matrix with
$D=diag(d_1,\cdots,d_n)$ where $d_i\in \mathbb{N}$ for $1\leq i\leq
n$. The pair $(M,\tilde{B})$ is called a \textit{quantum seed} if
the pair $(\Lambda_M, \tilde{B})$ is compatible.  Define the
$m\times m$ matrix $E=(e_{ij})$ by
\[e_{ij}=\begin{cases}
\delta_{ij} & \text{if $j\ne k$;}\\
-1 & \text{if $i=j=k$;}\\
max(0,-b_{ik}) & \text{if $i\ne j = k$.}
\end{cases}
\]
For $n,k\in\ZZ$, $k\ge0$, denote ${n\brack
k}_q=\frac{(q^n-q^{-n})\cdots(q^{n-r+1}-q^{-n+r-1})}{(q^r-q^{-r})\cdots(q-q^{-1})}$.
Let ${\bf c}=(c_1,\ldots,c_m)\in\ZZ^m$ with $c_{k}\geq 0$.  Define
the toric frame $M': \ZZ^m\to \Fcal \setminus \{0\}$ as follows:
\begin{equation}\label{eq:cl_exp}M'({\bf c})=\sum^{c_k}_{p=0} {c_k \brack p}_{q^{d_k/2}} M(E{\bf c}+p{\bf b}^k),\ \ M'({\bf -c})=M'({\bf c})^{-1}.\end{equation}
where the vector ${\bf b}^k\in\ZZ^m$ is the $k-$th column of
$\tilde{B}$.    Then the quantum seed $(M',\tilde{B}')$ is defined
to be the mutation of $(M,\tilde{B})$ in direction $k$. We say that
two quantum seeds are mutation-equivalent if they can ba obtained
from each other by a sequence of mutations. Let
$\mathcal{C}=\{M'(e_i): i\in[1,n]\}$ where $(M',\tilde{B}')$ is
mutation-equivalent to $(M,\tilde{B})$. The elements of
$\mathcal{C}$ are called \textit{cluster variables}. Let
$\mathcal{P}=\{M(e_i): i\in[n+1,m]\}$ and  the elements of
$\mathcal{P}$ are called  \emph{coefficients}.  The \textit{quantum
cluster algebra} $\Acal_q(\Lambda_M,\tilde{B})$ is the
$\ZZ[q^{\pm1/2}]$-subalgebra of $\Fcal$ generated by
$\mathcal{C}\cup \mathcal{P}$. We associated with $(M,\tilde{B})$
the  $\ZZ$-linear \emph{bar-involution} on $\Tcal_M$ by setting:
\[\overline{q^{r/2}M({\bf c})}=q^{-r/2}M({\bf c}), \ \  (r\in\ZZ,\ {\bf
c}\in\ZZ^n).\]
 It is easy to show that
$\overline{XY}=\overline{Y}~\overline{X}$ for all $X,Y\in
\Acal_q(\Lambda_M,\tilde{B})$ and that each element of
$\mathcal{C}\cup \mathcal{P}$ is \emph{bar-invariant}.

\subsection{The Kronecker quiver}
 Given a compatible pair $(\Lambda, \widetilde{B})$, we can associate a valued quiver (see \cite[Section 2]{rupel} for more details).  Now we set  $\Lambda=\left(\begin{array}{cc} 0 & 1\\ -1 &
0\end{array}\right)$ and $\widetilde{B}=\left(\begin{array}{cc} 0 & 2\\
-2 & 0\end{array}\right)$. The quiver $Q$ associated to this pair is
the Kronecker quiver: \vspace*{-0.5cm}
\begin{center}
 \setlength{\unitlength}{0.61cm}
 \begin{picture}(5,4)
 \put(0,2){1}\put(0.4,2.2){$\bullet$}
\put(3,2.2){$\bullet$}\put(3.4,2){2} \put(0.8,2.5){\vector(3,0){2}}
\put(0.8,2.2){\vector(3,0){2}}
 \end{picture}
 \end{center}
\vspace*{-1cm}

Let $k$ be a finite field with cardinality $|k|=q^2$. The category
$rep(kQ)$ of finite-dimensional representations can be identified
with the category of mod-$kQ$ of finite-dimensional modules over the
path algebra $kQ.$ It is well-known (see \cite{dlab}) that
indecomposable $kQ$-module contains (up to isomorphism) three
families: the indecomposable regular modules with dimension vector
$(nd_p, nd_p)$ for $p\in \mathbb{P}^1_k$ of degree $d_p$ (in
particular, denoted by $R_p(n)$ for $d_p=1$),  the preprojective
modules with dimension vector $(n-1,n)$ (denoted by $M(n)$)  and the
preinjective modules with dimension vector $(n,n-1)$ (denoted by
$N(n)$). Here $n\in \mathbb{N}$.

For $m\in \mathbb{Z}\setminus\{1, 2\}$, set
\[V(m)=\begin{cases}
N(m-2)& \text{if $m\geq 3$;}\\
M(-m+1) & \text{if $m\leq 0$.}
\end{cases}
\]

Now, let  $\Tcal=\QQ(q^{1/2})\langle X_1^{\pm1}, X_2^{\pm1}:
X_1X_2=qX_2X_1\rangle$ and  ${\mathcal F}$ be the skew field of
fractions of $\Tcal$ and thus the quantum cluster algebra of the
Kronecker quiver $\Acal_q(\Lambda,\tilde{B})$ (denoted by
$\Acal_q(2,2)$ in the following) is the $\QQ(q^{1/2})$-subalgebra of
${\mathcal F}$ generated by the cluster variables $X_k$, $k\in\ZZ$,
defined recursively by
\begin{equation}\label{recurrence}
X_{m-1}X_{m+1}= qX_m^2+1.
\end{equation}
The quantum Laurent phenomenon (\cite{berzel}) implies that each
$X_k$ belongs to the subring of $\Tcal$ generated by $q^{\pm 1/2}$,
$X_1^{\pm1}, X_2^{\pm1}$. The explicit Laurent expansion of each
$X_k$  in $X_1,X_2$  is given in \cite{lampe} and \cite{rupel}.

Let $V$ be a  representation of the Kronecker quiver with dimension
vector $\dimb V = (v_1,v_2)$.  For ${\bf
e}=(e_1,e_2)\in\ZZ_{\ge0}^2$, denote by $Gr_{\bf e}(V)$ the set of
all subrepresentations $M$ of $V$  with $\dimb M={\bf e}$.  In
\cite{rupel}, the author define the element $X_V$ of the quantum
torus $\Tcal$ by
\begin{equation}
\label{eq:XV rank 2} X_V=\sum_{{\bf e}} q^{-\half d_{{\bf
e}}^V}|Gr_{{\bf e}}(V)| X^{(-v_1+2v_2-2e_2,2e_1-v_2)}
\end{equation}
where $d_{{\bf e}}^V=2e_1(v_1-e_1)-2(2e_1-e_2)(v_2-e_2)$. This
formula is called a  $q$-deformation of the Caldero-Chapoton formula
(\cite{caldchap}). Here and the following, we simply write
$X^{\textbf{c}}$ instead of $X^{(\textbf{c})}$ for $\textbf{c}\in
\mathbb{Z}^2.$

\begin{theorem}\cite{rupel}\label{rank2}
For any   $m\in\ZZ\setminus \{1,2\}$, the $m$-th cluster variable
$X_m$ of $\Acal_q(2,2)$ equals $X_{V(m)}$.
\end{theorem}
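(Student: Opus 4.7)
The plan is to proceed by induction on $|m|$, using the defining recurrence
\[ X_{m-1} X_{m+1} = q X_m^2 + 1 \]
of \eqref{recurrence}. Starting from the initial seed $(X_1, X_2)$, this recurrence determines $X_m$ for every $m \in \ZZ$ in both directions; by the symmetry interchanging the two vertices of $Q$, it suffices to treat the preinjective side $m \geq 3$ in detail.

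For the base case one computes $X_{V(3)} = X_{N(1)}$ directly from formula \eqref{eq:XV rank 2}. Since $\dimb N(1) = (1,0)$, the sum collapses to the two terms ${\bf e} = (0,0)$ and ${\bf e} = (1,0)$, and matching the resulting explicit Laurent expression in $X_1, X_2$ against $X_3 = X_1^{-1}(q X_2^2 + 1)$ is routine. The analogous computation on the preprojective side yields $X_{V(0)} = X_{M(1)} = X_0$.

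For the inductive step, assume $X_k = X_{V(k)}$ for $k = m-1$ and $k = m$. By \eqref{recurrence}, to conclude $X_{m+1} = X_{V(m+1)}$ it suffices to establish the quantum torus identity
\[ X_{V(m-1)} X_{V(m+1)} = q X_{V(m)}^2 + 1. \]
Geometrically this is a quantum analog of the Caldero--Keller cluster multiplication formula, driven by the Auslander--Reiten sequence
\[ 0 \longrightarrow V(m+1) \longrightarrow V(m)^{\oplus 2} \longrightarrow V(m-1) \longrightarrow 0 \]
on the preinjective component of the AR-quiver (with the analogous sequence on the preprojective side). Expanding both sides via \eqref{eq:XV rank 2} and comparing coefficients of each Laurent monomial $X^{\bf c}$, the left hand side receives contributions from subrepresentations of $V(m-1) \oplus V(m+1)$, while the right hand side receives contributions from subrepresentations of $V(m)^{\oplus 2}$ together with the constant $1$ coming from the unique non-trivial extension class of $V(m-1)$ by $V(m+1)$.

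The main obstacle is this coefficient matching, a quantum refinement of the classical Grassmannian identity underlying the Caldero--Chapoton formula. In the classical $q = 1$ case one stratifies $Gr_{\bf f}(V(m+1))$ by the image of each subrepresentation in $V(m)^{\oplus 2}$ under the AR-map and counts fibres geometrically. The quantum lift requires tracking three sources of $q$-powers simultaneously --- the exponents $d^V_{\bf e}$ built into \eqref{eq:XV rank 2}, the $q^{\Lambda({\bf c},{\bf d})/2}$ factors arising from multiplication in $\Tcal$, and the point counts of the subrepresentation Grassmannians over the field $k$ with $|k| = q^2$ --- and showing that an Euler-form identity, available because $V(m-1), V(m), V(m+1)$ lie on a single AR-orbit, makes the resulting $q$-exponents align stratum by stratum.
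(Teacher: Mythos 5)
The paper itself offers no proof of this theorem: it is imported verbatim from \cite{rupel}, so the only argument on record here is the citation. Your overall strategy --- induction via the recurrence \eqref{recurrence}, reducing everything to the quantum exchange identity $X_{V(m-1)}X_{V(m+1)} = qX_{V(m)}^2+1$ in the torus $\Tcal$, to be verified from \eqref{eq:XV rank 2} using the Auslander--Reiten sequence with middle term $V(m)^{\oplus 2}$ --- is the standard route and is in the same spirit as Rupel's argument. Your base case is correct and genuinely routine: for $\dimb N(1)=(1,0)$ formula \eqref{eq:XV rank 2} collapses to $X_{V(3)}=X^{(-1,2)}+X^{(-1,0)}$, which matches $X_1^{-1}(qX_2^2+1)$.

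The genuine gap is that the inductive step is announced rather than carried out. The identity $X_{V(m-1)}X_{V(m+1)} = qX_{V(m)}^2+1$ \emph{is} the theorem once the base case is done, and your text explicitly labels its verification ``the main obstacle,'' then enumerates the three sources of $q$-powers that would need to be reconciled --- the exponents $d^V_{\bf e}$, the $q^{\Lambda(\cdot,\cdot)/2}$ twists from multiplying in $\Tcal$, and the Grassmannian point counts over $k$ with $|k|=q^2$ --- without reconciling them. No stratification of $Gr_{\bf e}\bigl(V(m-1)\oplus V(m+1)\bigr)$ is actually constructed, no fibre counts are computed, and the assertion that an Euler-form identity makes all exponents ``align stratum by stratum'' is precisely the statement that has to be proved; as written, the proposal restates the hard part as an unproven identity. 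Two further imprecisions: the appeal to ``the symmetry interchanging the two vertices'' to handle the preprojective side needs care in the quantum setting, since swapping $X_1$ and $X_2$ reverses the relation $X_1X_2=qX_2X_1$ and so is not an algebra automorphism of $\Tcal$; and the constant $1$ in $qX_m^2+1$ does not come from a nonsplit module extension of $V(m-1)$ by $V(m+1)$ (one checks $\Ext^1(V(m+1),V(m-1))=0$ in $rep(kQ)$) --- it is the contribution of the zero middle term of the second triangle in the cluster category, so that part of the outline would have to be reformulated before it could be made rigorous.
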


\section{Bases of the quantum cluster algebra $\mathcal{A}_q(2,2)$}
In this section, we will  construct various bar-invariant
$\mathbb{Z}[q^{\pm \frac{1}{2}}]-$bases of quantum cluster algebra
$\mathcal{A}_q(2,2).$ Under the specialization $q=1$, these bases
are just bases of the cluster algebra of the Kronecker quiver.
\begin{definition}\label{def}
For  any $(r_{1},r_{2})$ and $(s_{1},s_{2})\in \mathbb{Z}^{2}$, we
write $(r_{1},r_{2})\preceq (s_{1},s_{2})$ if $r_{i}\leq s_{i}$ for
$1\leq i\leq 2$. Moreover, if there exists some $i$ such that
$r_{i}< s_{i}$, then we write $(r_{1},r_{2})\prec (s_{1},s_{2}).$
\end{definition}

\begin{remark}\label{6}
By the definition of the $q$-deformation of the Caldero-Chapoton
formula and the partial order in Definition \ref{def}, we obtain
that the expansion of $X_{V(m)}$ have a minimal non-zero term
$f(q^{\frac{1}{2}}, q^{-\frac{1}{2}})X^{-\dimb V(m)}$  where
$f(q^{\frac{1}{2}}, q^{-\frac{1}{2}})\in \mathbb{Z}[q^{\frac{1}{2}},
q^{-\frac{1}{2}}]$. In fact, $f(q^{\frac{1}{2}},
q^{-\frac{1}{2}})=1$ by the explicit expansion of $X_{V(m)}$ in
\cite{lampe} and \cite[Proposition 1.1]{rupel}.
\end{remark}
\begin{lemma}\label{1}
Let $R_{p}(1)$ be the indecomposable regular module of degree $1$ as
above. Then $$X_{R_{p}(1)}=X^{(-1,1)}+X^{(1,-1)}+X^{(-1,-1)}.$$
\end{lemma}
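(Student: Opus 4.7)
The plan is to apply the $q$-deformation of the Caldero-Chapoton formula \eqref{eq:XV rank 2} directly with $V = R_p(1)$, which has dimension vector $(v_1,v_2) = (1,1)$. Since $e_i \leq v_i = 1$ for any subrepresentation, the only possible dimension vectors of subrepresentations are $\mathbf{e} \in \{(0,0),(1,0),(0,1),(1,1)\}$, so the sum has at most four terms.

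First I would enumerate $|Gr_{\mathbf e}(R_p(1))|$ for each candidate. The cases $\mathbf{e}=(0,0)$ and $\mathbf{e}=(1,1)$ trivially contribute one subrepresentation each (the zero and the full one). For $\mathbf{e}=(0,1)$, any one-dimensional subspace at vertex $2$ (there is exactly one) is automatically stable under both arrows, giving $|Gr_{(0,1)}(R_p(1))|=1$. The critical case is $\mathbf{e}=(1,0)$: a subrepresentation with this dimension vector would require both structure maps of $R_p(1)$ to vanish on all of $k$, but these maps are not simultaneously zero (otherwise $R_p(1)$ would split as $S_1\oplus S_2$), so $|Gr_{(1,0)}(R_p(1))|=0$. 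This vanishing is the only nontrivial geometric input.

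Next I would compute, for each surviving $\mathbf{e}$, the exponent $d_{\mathbf e}^V = 2e_1(v_1-e_1) - 2(2e_1-e_2)(v_2-e_2)$ and the exponent vector $(-v_1+2v_2-2e_2,\, 2e_1-v_2)$. For $(0,0)$ one gets $d=0$ and vector $(1,-1)$; for $(0,1)$ one gets $d=0$ and vector $(-1,-1)$; for $(1,1)$ one gets $d=0$ and vector $(-1,1)$. Substituting into \eqref{eq:XV rank 2} then gives
\[
X_{R_p(1)} = X^{(1,-1)} + X^{(-1,-1)} + X^{(-1,1)},
\]
which is the claimed identity.

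I do not expect any real obstacle beyond the bookkeeping above; the one place where conceptual input is needed is ruling out subrepresentations of dimension $(1,0)$, which relies on the non-degeneracy of the two arrows in $R_p(1)$. Everything else is a finite computation from the definitions.
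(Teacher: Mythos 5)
Your proof is correct and follows exactly the paper's route: the paper likewise observes that $R_p(1)$ has precisely the three submodules $0$, $M(1)$ (the simple at vertex $2$, ruling out dimension vector $(1,0)$ since the arrows do not both vanish) and $R_p(1)$ itself, and then reads off the three terms from the $q$-deformed Caldero--Chapoton formula. Your version merely writes out the exponent computations that the paper leaves implicit.
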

\begin{proof}
Note that $R_p(1)$ contains the three submodules: $0,  M(1)$ and
$R_p(1)$. Thus the lemma immediately follows from the
$q$-deformation of the Caldero-Chapoton formula.
\end{proof}

By Lemma \ref{1}, the expression of $X_{R_p(1)}$ is independent of
the choice of  $p\in \mathbb{P}^1_k$ of degree 1. Hence, we set
$$
X_{\delta}:= X_{R_p(1)}.
$$
\begin{definition}
(1)\ The n-th Chebyshev polynomials of the first kind is the
polynomial $F_{n}(x)\in \mathbb{Z}[x]$ defined by
$$F_{0}(x)=1, F_{1}(x)=x, F_{2}(x)=x^2-2,F_{n+1}(x)=F_{n}(x)F_{1}(x)-F_{n-1}(x) \ for \ n\geq 2.$$

(2)\ The n-th Chebyshev polynomials of the second kind is the
polynomial $S_{n}(x)\in \mathbb{Z}[x]$ defined by
$$S_{0}(x)=1, S_{1}(x)=x, S_{2}(x)=x^2-1,S_{n+1}(x)=S_{n}(x)S_{1}(x)-S_{n-1}(x) \ for \ n\geq 2.$$
\end{definition}
It is obvious that $F_{n}(x)=S_{n}(x)-S_{n-2}(x).$ We denote
$z=X_{\delta}, z_{n}=F_{n}(z), s_{n}=S_{n}(z)$ for $n\geq 0$ and
$z_{n}=s_{n}=0$ for $n<0$. Set
$$\mathcal{B}=\{X^{a}_{m}X^{b}_{m+1}|m\in \mathbb{Z}, (a,b)\in \mathbb{Z}^{2}_{\geq
0}\}\cup \{z_{n}|n\in \mathbb{N}\}$$
$$\mathcal{S}=\{X^{a}_{m}X^{b}_{m+1}|m\in \mathbb{Z},(a,b)\in
\mathbb{Z}^{2}_{\geq 0}\}\cup \{s_{n}|n\in \mathbb{N}\}$$
$$\mathcal{D}=\{X^{a}_{m}X^{b}_{m+1}|m\in \mathbb{Z},(a,b)\in
\mathbb{Z}^{2}_{\geq 0}\}\cup \{z^{n}|n\in \mathbb{N}\}$$

\begin{remark}\label{7}
(1)\ It is easy to check that $X^{(r,r)}X^{(s,s)}=X^{(r+s,r+s)}$ for
any $r,s\in \mathbb{Z}$, thus the expansions of $z_{n},s_{n}$ and
$z^{n}$ have a minimal non-zero term $f(q^{\frac{1}{2}},
q^{-\frac{1}{2}})X^{-(n,n)}$ according to the partial order in
Definition \ref{def}.

(2)\ The elements $\textbf{c}$ associated to  these minimal non-zero
terms $f(q^{\frac{1}{2}}, q^{-\frac{1}{2}})X^{\textbf{c}}$ in the
expansion of the elements in the set $\mathcal{B}$ are different
from  each other. Indeed, it is easy to compute
\[\textbf{c}=\begin{cases}
(b, -a)& \text{if $X^{\textbf{c}}=X_0^aX_1^b$;}\\
(a, b)& \text{if $X^{\textbf{c}}=X_1^aX_2^b$;}\\
(-b, a)& \text{if $X^{\textbf{c}}=X_2^aX_3^b$;}\\
a\cdot\dimb V(m)+b\cdot\dimb V(m+1) & \text{if $X^{\textbf{c}}=X_{m}^aX_{m+1}^b$ for $m\neq 0, 1 \mbox{ or }2$;}\\
(n, n) & \text{if $X^{\textbf{c}}=z_n$.}
\end{cases}
\]
We note that there is at most one exceptional module in each
dimension vector.
\end{remark}

Now we define a ring homomorphism of the quantum cluster algebra
$\Acal_q(2,2)$:
$$\sigma_{1}:\ \Acal_q(2,2)\longrightarrow \Acal_q(2,2)$$
which sends $X_m$ to $X_{m+1}$ and $q^{\pm\frac{1}{2}}$ to
$q^{\pm\frac{1}{2}}$. It is obviously an automorphism which
preserves the defining relations. The following lemma is easy but
important.

\begin{lemma}\label{2}
$$\sigma_{1}(X_{\delta})=X_{\delta}.$$
\end{lemma}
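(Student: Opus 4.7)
The plan is a direct computation starting from the explicit expansion of $X_\delta$ given in Lemma \ref{1} together with the exchange relation \eqref{recurrence}. First I would make $X_\delta$ concrete in the initial variables: using $X^{(c_1,c_2)}=q^{-c_1c_2/2}X_1^{c_1}X_2^{c_2}$ (which follows from the formula for $M(\mathbf{c})$ in Section 2.1, noting $\lambda_{21}=-1$), Lemma \ref{1} becomes
\[
X_\delta = q^{1/2}X_1^{-1}X_2 + q^{1/2}X_1X_2^{-1} + q^{-1/2}X_1^{-1}X_2^{-1}.
\]
Applying $\sigma_1$ formally yields the analogous expression in $X_2,X_3$, and the task reduces to showing this agrees with $X_\delta$ after eliminating $X_3$.

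To eliminate $X_3$ I would use the exchange relation $X_1X_3=qX_2^{2}+1$ (which is \eqref{recurrence} with $m=2$). A preliminary step is to verify the commutation $X_2X_3=qX_3X_2$: since $X_2$ commutes with $qX_2^2+1=X_1X_3$ and $X_1X_2=qX_2X_1$, this drops out of a one-line manipulation. With that in hand I would handle the two pieces of $\sigma_1(X_\delta)$ separately. Substituting $X_3=X_1^{-1}(qX_2^{2}+1)$ and pushing $X_1^{-1}$ past the $X_2$-powers using $X_1^{-1}X_2=q^{-1}X_2X_1^{-1}$, I expect
\[
q^{1/2}X_2^{-1}X_3 = q^{1/2}X_1^{-1}X_2 + q^{-1/2}X_1^{-1}X_2^{-1}.
\]
For the remaining two terms, I would show $q^{1/2}X_2X_3^{-1}+q^{-1/2}X_2^{-1}X_3^{-1}=q^{1/2}X_1X_2^{-1}$ by multiplying both sides on the right by $X_3X_2$, applying $X_3X_2=q^{-1}X_2X_3$, and recognizing both sides as $X_2^{2}+q^{-1}$ via the exchange relation. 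Summing the two pieces recovers the three monomials of $X_\delta$.

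The only real obstacle is bookkeeping: every step requires tracking a $q$-exponent produced by commuting $X_i^{\pm 1}$ past $X_j^{\pm 1}$, and a sign error in any of these exponents would spoil the cancellation. Conceptually nothing deeper is needed, and the entire proof should fit in a handful of lines once these commutations are tabulated.
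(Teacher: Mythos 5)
Your computation is correct---I checked the two key identities $q^{1/2}X_2^{-1}X_3 = q^{1/2}X_1^{-1}X_2+q^{-1/2}X_1^{-1}X_2^{-1}$ and $q^{1/2}X_2X_3^{-1}+q^{-1/2}X_2^{-1}X_3^{-1}=q^{1/2}X_1X_2^{-1}$, and both hold---but it is a genuinely different route from the paper's. The paper never eliminates $X_3$ via the exchange relation; instead it computes the Laurent expansions of $X_0$, $X_3$ and $X_{-1}$ from Theorem \ref{rank2} and verifies the two polynomial identities $X_{\delta} = q^{1/2}(X_0X_3-qX_1X_2)=q^{1/2}(X_{-1}X_2-qX_0X_1)$ (equation \eqref{imaginary}); since the second expression is the index shift of the first, applying $\sigma_1$ carries one to the other and the invariance is immediate. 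That approach buys two things yours does not: it exhibits $X_{\delta}$ as a polynomial in cluster variables, so that $\sigma_1(X_{\delta})$ is defined without extending $\sigma_1$ beyond $\mathcal{A}_q(2,2)$, whereas you apply $\sigma_1$ to $X_1^{-1}$ and $X_2^{-1}$, which do not lie in the cluster algebra---so you should add the (standard, one-line) remark that the automorphism extends uniquely to the skew-field of fractions $\mathcal{F}$ before applying it to the Laurent expansion; and the identity \eqref{imaginary} is reused later in the proof of Proposition \ref{5}(3), so the paper gets it for free here. Your route is more self-contained in that it needs only Lemma \ref{1} and the exchange relation \eqref{recurrence}, at the price of the $q$-exponent bookkeeping you acknowledge and the extension remark above.
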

\begin{proof}
By Theorem \ref{rank2} and the definition of the $q$-deformation of
the Caldero-Chapoton formula, we have
\begin{eqnarray}
    X_{0}=X_{V(0)} &=& X^{(2,-1)}+ X^{(0,-1)}, \nonumber\\
    X_{3}=X_{V(3)} &=& X^{(-1,2)}+ X^{(-1,0)}, \nonumber\\
    X_{-1}=X_{V(-1)}&=&  X^{(3,-2)}+X^{(-1,-2)}+(q+q^{-1})X^{(1,-2)}+X^{(-1,0)}.\nonumber
\end{eqnarray}
Following these identities and Lemma \ref{1}, one easily confirm the
relations
\begin{equation}\label{imaginary}
X_{\delta}=q^{\frac{1}{2}}(X_0X_3-qX_1X_2)=q^{\frac{1}{2}}(X_{-1}X_2-qX_0X_1).\end{equation}
Thus
$\sigma_{1}(X_{\delta})=\sigma_{1}(q^{\frac{1}{2}}(X_{-1}X_2-qX_0X_1))=q^{\frac{1}{2}}(X_0X_3-qX_1X_2)=X_{\delta}.$
\end{proof}

\begin{lemma}\label{3}
For any $n\in \mathbb{Z},$
$$X_{n}X_{\delta}=q^{-\frac{1}{2}}X_{n-1}+q^{\frac{1}{2}}X_{n+1}.$$
\end{lemma}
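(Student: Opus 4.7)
The strategy is to verify the identity for one convenient base value of $n$ by a direct expansion in the quantum torus, and then propagate it to every integer using the index-shift automorphism $\sigma_1$ introduced just before the lemma.

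For the base case I take $n=1$. Here $X_1=X^{(1,0)}$ is already a monomial in the initial toric frame, and by Lemma \ref{1}, $X_\delta=X^{(-1,1)}+X^{(1,-1)}+X^{(-1,-1)}$. Multiplying these out term by term with the toric rule $X^eX^f=q^{\Lambda(e,f)/2}X^{e+f}$, where $\Lambda((a,b),(c,d))=ad-bc$, should give
\[
X_1 X_\delta \;=\; q^{1/2}X^{(0,1)}\;+\;q^{-1/2}X^{(2,-1)}\;+\;q^{-1/2}X^{(0,-1)}.
\]
Recognizing $X^{(0,1)}=X_2$ and invoking the explicit expression $X_0=X^{(2,-1)}+X^{(0,-1)}$ already displayed in the proof of Lemma \ref{2}, the right-hand side collapses to $q^{-1/2}X_0+q^{1/2}X_2$. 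This is the desired identity at $n=1$.

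To extend to arbitrary $n\in\mathbb{Z}$, apply the ring automorphism $\sigma_1$. By construction $\sigma_1(X_m)=X_{m+1}$ for every $m\in\mathbb{Z}$, and by Lemma \ref{2}, $\sigma_1(X_\delta)=X_\delta$. Hence iterating $\sigma_1^k$ on the base case yields $X_{k+1}X_\delta=q^{-1/2}X_k+q^{1/2}X_{k+2}$ for every $k\geq 0$, covering all $n\geq 1$. The analogous iteration of $\sigma_1^{-1}$, which shifts indices by $-1$ and also fixes $X_\delta$, covers $n\leq 0$.

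The only thing to check carefully is that $\sigma_1$ really is a two-sided automorphism, so that $\sigma_1^{-1}$ is legitimately available; this is clear because the symmetric exchange relation \eqref{recurrence} allows one to define the inverse shift $X_m\mapsto X_{m-1}$ by the same construction, and the two shifts are mutually inverse. Beyond that, the main step is the bookkeeping multiplication in the quantum torus above, which is routine once the commutation data $\Lambda$ is in hand; I do not anticipate any serious obstacle.
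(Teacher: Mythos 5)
Your proof is correct and follows essentially the same route as the paper's: verify the identity at one base index by direct multiplication in the quantum torus using Lemma \ref{1} and the explicit expansion of a nearby cluster variable, then propagate to all $n\in\mathbb{Z}$ via the shift automorphism $\sigma_1$ (which fixes $X_\delta$ by Lemma \ref{2}). The only differences are cosmetic: you take $n=1$ as the base case (slightly cleaner, since $X_1$ is a single monomial) where the paper takes $n=0$, and you are more explicit than the paper about invoking $\sigma_1^{-1}$ to reach negative indices.
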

\begin{proof}
By an easy computation, we have:
\begin{eqnarray}
    X_{0} &=& X^{(2,-1)}+ X^{(0,-1)} \nonumber\\
    X_{-1}&=&  X^{(3,-2)}+X^{(-1,-2)}+(q+q^{-1})X^{(1,-2)}+X^{(-1,0)}.\nonumber
\end{eqnarray}
Then by Lemma \ref{1}, it is easy to prove
$$X_{0}X_{\delta}=q^{-\frac{1}{2}}X_{-1}+q^{\frac{1}{2}}X_{1}.$$
Thus we can finish the proof by Lemma \ref{2} and applying the
automorphism $\sigma_{1}$.
\end{proof}

\begin{lemma}\label{4}
$$\overline{X_{\delta}}=X_{\delta}.$$
\end{lemma}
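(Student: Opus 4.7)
The plan is to invoke Lemma \ref{1} and then simply unwind the definition of the bar-involution. By Lemma \ref{1} we have the explicit expansion
\[
X_{\delta} = X^{(-1,1)} + X^{(1,-1)} + X^{(-1,-1)},
\]
which is an integer linear combination of basis elements $X^{\bf c} = M({\bf c})$ of the quantum torus $\Tcal$.

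Recalling the defining relation $\overline{q^{r/2}M({\bf c})} = q^{-r/2}M({\bf c})$, specialized at $r=0$, gives $\overline{M({\bf c})} = M({\bf c})$ for every ${\bf c} \in \ZZ^2$. Hence each of the three monomials $X^{(-1,1)}$, $X^{(1,-1)}$, $X^{(-1,-1)}$ is bar-invariant. Since the bar-involution is $\ZZ$-linear and the three coefficients appearing in the expansion are integers (in fact all equal to $1$), it follows at once that $\overline{X_{\delta}} = X_{\delta}$.

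There is essentially no obstacle: all the content sits already inside Lemma \ref{1}. One could alternatively derive the bar-invariance from the identity $X_{\delta} = q^{1/2}(X_0 X_3 - q X_1 X_2) = q^{1/2}(X_{-1}X_2 - q X_0 X_1)$ established in \eqref{imaginary}, together with the already-noted bar-invariance of each cluster variable $X_m$. However, this would force us to track the commutation factors between the non-commuting pairs $(X_0, X_3)$, $(X_1, X_2)$ (respectively $(X_{-1}, X_2)$, $(X_0, X_1)$) in the quantum torus, in order to verify that the scalar prefactors $q^{1/2}$ and $q$ exactly compensate. The Lemma \ref{1} route bypasses this bookkeeping entirely, so I would use it.
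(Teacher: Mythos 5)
Your proof is correct, but it takes a different route from the paper's. The paper does exactly what you describe in your last paragraph as the ``alternative'': it starts from the identity $X_{\delta}=q^{\frac{1}{2}}(X_0X_3-qX_1X_2)$ of \eqref{imaginary}, applies the anti-automorphism property $\overline{XY}=\overline{Y}\,\overline{X}$ together with the bar-invariance of the cluster variables to get $\overline{X_{\delta}}=q^{-\frac{1}{2}}(X_3X_0-q^{-1}X_2X_1)$, and then identifies this with $X_{\delta}$ (which does require the commutation bookkeeping you flag, though the paper leaves it implicit). Your route instead reads the bar-invariance directly off the monomial expansion $X_{\delta}=X^{(-1,1)}+X^{(1,-1)}+X^{(-1,-1)}$ of Lemma \ref{1}, using that each $M({\bf c})$ is bar-invariant by definition and that the coefficients lie in $\ZZ$; this is shorter and avoids any commutation computation. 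What the paper's approach buys is that it only uses the exchange-type relation \eqref{imaginary} and the general fact that cluster variables are bar-invariant, so it would survive in situations where no explicit Laurent expansion of $X_{\delta}$ is at hand; your approach is the more economical one here, where Lemma \ref{1} is already available.
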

\begin{proof}
$\overline{X_{\delta}}=q^{-\frac{1}{2}}(\overline{X_0X_3}-q^{-1}\overline{X_1X_2})=q^{-\frac{1}{2}}(X_3X_0-q^{-1}X_2X_1)=X_{\delta}.$
\end{proof}

\begin{remark}\label{9}
By Lemma \ref{4}, we can verify that
$\overline{z_{n}}=z_{n},\overline{s_{n}}=s_{n}$ and
$\sigma_{1}(z_{n})=z_{n},\sigma_{1}(s_{n})=s_{n}$.
\end{remark}

The following proposition, which can be viewed as the quantum
analogue of \cite[Proposition 5.4]{sherzel},  plays an essential
role to construct $\mathbb{Z}[q^{\pm \frac{1}{2}}]-$bases of the
quantum cluster algebra $\Acal_q(2,2).$
\begin{proposition}\label{5}
(1)\ For $m>n\geq 1$:
\begin{eqnarray}
    z_{n}z_{m} &=& z_{m+n}+z_{m-n} \nonumber\\
    z_{n}z_{n} &=&  z_{2n}+2.\nonumber
\end{eqnarray}
(2)\ $m\geq 1$  and $n\in \mathbb{Z}$:
\begin{eqnarray}
    X_{n}z_{m} &=&  q^{\frac{m}{2}}X_{n+m}+q^{-\frac{m}{2}}X_{n-m}.\nonumber
\end{eqnarray}
(3)\ For $m\geq 0 \ and\ n\in \mathbb{Z}:$
\begin{eqnarray}
   X_{n}X_{n+2m} &=& q^{m}X^{2}_{n+m}+\sum_{l=0}^{m-1}q^{-m+2l+1}\sum_{k=l+1}^m z_{2(m-k)}, \nonumber\\
   X_{n}X_{n+2m+1} &=& q^{m}X_{n+m}X_{n+m+1}+\sum_{l=0}^{m-1}q^{-m+2l+\frac{1}{2}}\sum_{k=l+1}^m z_{2(m-k)+1}.\nonumber
\end{eqnarray}
\end{proposition}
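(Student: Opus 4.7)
The plan is to prove the three parts in sequence, using each in the next. For part (1), since $z = X_\delta$ is a single element, the $z_n = F_n(z)$ lie in the commutative subring $\ZZ[q^{\pm 1/2}][z]$ of $\Acal_q(2,2)$, so both identities reduce to the classical Chebyshev product formulas $F_m F_n = F_{m+n} + F_{m-n}$ for $m > n \geq 1$ and $F_n^2 = F_{2n} + 2$ in the $F_0 = 1$ normalization. These follow by induction on $n$ from the defining recurrence $F_{n+1} = F_1 F_n - F_{n-1}$; the base case $n=1$ of the product formula is just a rewriting of the recurrence, and the $F_n^2 = F_{2n}+2$ identity is the borderline case.

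For part (2), I would induct on $m$, with base cases $m=1$ (Lemma \ref{3}) and $m=2$ (a direct two-step calculation using Lemma \ref{3} twice, together with $z_2 = z_1^2 - 2$). For the inductive step with $m \geq 3$, the Chebyshev recurrence $z_m = z_{m-1} z_1 - z_{m-2}$ gives
\[
X_n z_m = (X_n z_{m-1}) z_1 - X_n z_{m-2}.
\]
Applying the inductive hypothesis to both $X_n z_{m-1}$ and $X_n z_{m-2}$, and then Lemma \ref{3} to each resulting $X_{n \pm (m-1)} z_1$, the mixed terms cancel and the surviving terms give precisely $q^{m/2} X_{n+m} + q^{-m/2} X_{n-m}$.

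The heart of the argument is part (3), which I would prove by simultaneous induction on $m$ for both formulas. Set $E_m(n) = X_n X_{n+2m}$ and $O_m(n) = X_n X_{n+2m+1}$. Applying Lemma \ref{3} on the right (to $X_{n+2m+1}$ and $X_{n+2m}$ respectively) yields the recursions
\[
E_{m+1}(n) = q^{-1/2} O_m(n) z_1 - q^{-1} E_m(n), \qquad O_m(n) = q^{-1/2} E_m(n) z_1 - q^{-1} O_{m-1}(n).
\]
The base cases $E_0 = X_n^2$, $O_0 = X_n X_{n+1}$, and $E_1 = q X_{n+1}^2 + 1$ (from \eqref{recurrence}) are immediate. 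For the inductive step, substitute the claimed formulas into the recursions: the products $X_{n+m}^2 z_1$ and $X_{n+m} X_{n+m+1} z_1$ expand via Lemma \ref{3} and \eqref{recurrence}, and the leading $X_{n+m+1}^2$ and $X_{n+m}X_{n+m+1}$ terms of $E_{m+1}$ and $O_m$ emerge directly (using the quasi-commutation $X_k X_{k+1} = q X_{k+1} X_k$ of consecutive cluster variables, which is preserved under mutation in the rank-$2$ quantum cluster algebra). What remains is a purely combinatorial identity in $\ZZ[q^{\pm 1/2}][z]$: writing $A_m, B_m$ for the double sums in the statement, one must verify
\[
A_{m+1} = q^m + q^{-1/2} B_m z_1 - q^{-1} A_m, \qquad B_m = q^{-1/2} A_m z_1 - q^{-1} B_{m-1}.
\]
These follow from part (1) by expanding $z_{2(m-k)} z_1 = z_{2(m-k)+1} + z_{2(m-k)-1}$ and $z_{2(m-k)+1} z_1 = z_{2(m-k)+2} + z_{2(m-k)}$ (with the anomalous $z_1^2 = z_2 + 2$ handled separately) and then reindexing.

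The main obstacle will be the bookkeeping in this last combinatorial step: the anomalous $+2$ in $z_1^2 = z_2 + 2$ is precisely what produces the free $q^m$ term in the recursion for $A_{m+1}$, and matching the powers of $q$ across the reindexed double sums is delicate but mechanical. Once this combinatorial identity is checked, the simultaneous induction closes and both formulas in (3) are established.
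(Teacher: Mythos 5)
Your proposal is correct and follows essentially the same route as the paper: part (1) from the Chebyshev recurrence, part (2) by induction via Lemma \ref{3} and $z_{m}=z_{m-1}z_1-z_{m-2}$, and part (3) by the simultaneous induction driven by the recursions $X_nX_{n+2m+2}=q^{-1/2}X_nX_{n+2m+1}z_1-q^{-1}X_nX_{n+2m}$ and its odd counterpart, with part (1) absorbing the $z$-sums. If anything you are slightly more careful than the paper, which glosses over the $m=2$ base case of (2) (where $z_2=z_1^2-2\neq z_1^2-z_0$) and leaves implicit the quasi-commutation $X_kX_{k+1}=qX_{k+1}X_k$ needed to cancel the cross terms in the odd case.
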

\begin{proof}
 The proof of (1) follows from the inductive relations in the definition of Chebyshev
polynomials.\\
As for (2), we make induction on $m$. If $m=1,$ the equation in (2)
is a direct corollary of Lemma \ref{3}. We assume that (2) holds for
$m\leq k.$ For $m=k+1,$ we have
\begin{eqnarray}
    X_{n}z_{k+1} &=& X_{n}(z_{k}z_{1}-z_{k-1})  \nonumber\\
&=&(q^{\frac{k}{2}}X_{n+k}+q^{-\frac{k}{2}}X_{n-k})z_{1}-(q^{\frac{k-1}{2}}X_{n+k-1}+q^{-\frac{k-1}{2}}X_{n-k+1})\nonumber\\
&=&q^{\frac{k}{2}}X_{n+k}z_{1}+q^{-\frac{k}{2}}X_{n-k}z_{1}-(q^{\frac{k-1}{2}}X_{n+k-1}+q^{-\frac{k-1}{2}}X_{n-k+1})\nonumber\\
&=&q^{\frac{k}{2}}(q^{\frac{1}{2}}X_{n+k+1}+q^{-\frac{1}{2}}X_{n+k-1})+q^{-\frac{k}{2}}(q^{\frac{1}{2}}X_{n-k+1}+q^{-\frac{1}{2}}X_{n-k-1})\nonumber\\
&&-(q^{\frac{k-1}{2}}X_{n+k-1}+q^{-\frac{k-1}{2}}X_{n-k+1})\nonumber\\
&=&q^{\frac{k+1}{2}}X_{n+k+1}+q^{-\frac{k+1}{2}}X_{n-k-1}.\nonumber
\end{eqnarray}
This proves (2). Now we prove (3). If $m=0,$ it is obvious. If
$m=1,$ by the recurrence relations \eqref{recurrence}, we have
$$X_{n}X_{n+2}=qX^{2}_{n+1}+1.$$ We have proved that the equation
 $X_0X_3=qX_1X_2+q^{-\frac{1}{2}}z$ (see
\eqref{imaginary}) holds, thus by Lemma \ref{2}, we have
$$X_{n}X_{n+3}=qX_{n+1}X_{n+2}+q^{-\frac{1}{2}}z.$$
Now we assume that  equations in (3) hold for $m\leq k$. For
$m=k+1,$ by Lemma \ref{3}, we have
$$
X_{n}X_{n+2k+2}=
q^{-\frac{1}{2}}X_{n}(X_{n+2k+1}z_{1}-q^{-\frac{1}{2}}X_{n+2k}).
$$
Following the inductive assumption, it is equal to
$$
q^{-\frac{1}{2}}(q^{k}X_{n+k}X_{n+k+1}+\sum_{l=0}^{k-1}q^{-k+2l+\frac{1}{2}}\sum_{i=l+1}^k
z_{2(k-i)+1})z_{1}-q^{-1}(q^{k}X^{2}_{n+k}+\sum_{l=0}^{k-1}q^{-k+2l+1}\sum_{i=l+1}^k
z_{2(k-i)}).
$$
Using Lemma \ref{3} again and (1) of this proposition, it is
$$
q^{k-\frac{1}{2}}X_{n+k}(q^{\frac{1}{2}}X_{n+k+2}+q^{-\frac{1}{2}}X_{n+k})+\sum_{l=0}^{k-1}q^{-k+2l}\sum_{i=l+1}^k
z_{2(k-i)+1}z_{1}$$$$-q^{-1}(q^{k}X^{2}_{n+k}+\sum_{l=0}^{k-1}q^{-k+2l+1}\sum_{i=l+1}^k
z_{2(k-i)})
$$
$$
=q^{k+1}X^{2}_{n+k+1}+\sum_{l=0}^{k}q^{-k+2l}\sum_{i=l+1}^{k+1}
z_{2(k-i+1)}. \quad \quad \text{(*)}
$$
Similarly, by Lemma \ref{3}, we have
$$
X_{n}X_{n+2k+3} =
q^{-\frac{1}{2}}X_{n}(X_{n+2k+2}z_{1}-q^{-\frac{1}{2}}X_{n+2k+1})
$$
Using the equation (*) and similar proof, we obtain
$$
X_{n}X_{n+2k+3}
=q^{k+1}X_{n+k+1}X_{n+k+2}+\sum_{l=0}^{k}q^{-k+2l-\frac{1}{2}}\sum_{i=l+1}^{k+1}
z_{2(k-i+1)+1}.
$$
\end{proof}
\begin{remark}\label{8}
By Lemma \ref{4} and properties of bar-invariant, we can easily
obtain the similar results for $z_{m}X_{n}, X_{n+2m}X_{n},
X_{n+2m}X_{n}$.
\end{remark}
We similarly define the quantized version of the definition of
positivity in \cite{sherzel}.
\begin{definition}
A nonzero element $x\in\Acal_q(2,2)$ is positive if for every
$m\in\mathbb{Z},$ all the coefficients in the expansion of $x$ as a
Laurent polynomial in $\{x_m,x_{m+1}\}$ belong to
$\mathbb{N}[q^{\pm\frac{1}{2}}].$
\end{definition}
\begin{corollary}
Every element in  $\mathcal{B},\mathcal{S}$ and $\mathcal{D}$ is a
positive element of quantum cluster algebra $\Acal_q(2,2)$.
\end{corollary}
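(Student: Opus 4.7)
The plan is to reduce positivity with respect to every cluster to positivity with respect to a single fixed cluster, and then handle the cluster-monomial elements and the ``imaginary'' elements separately.

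For the reduction, observe that the ring automorphism $\sigma_1$ (introduced just before Lemma \ref{2}) sends $X_m\mapsto X_{m+1}$ and therefore permutes the clusters $\{X_n,X_{n+1}\}$ and the cluster monomials $X_m^aX_{m+1}^b$; by Lemma \ref{2} it also fixes $z$, and hence by Remark \ref{9} each $z_n$, $s_n$, $z^n$. Thus each of $\mathcal{B}$, $\mathcal{S}$, $\mathcal{D}$ is $\sigma_1$-stable, and positivity in every cluster is equivalent to positivity in the single cluster $\{X_1,X_2\}$.

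For cluster monomials $X_m^aX_{m+1}^b$, Theorem \ref{rank2} identifies $X_m=X_{V(m)}$, and the $q$-deformed Caldero-Chapoton formula \eqref{eq:XV rank 2} exhibits $X_m$ as a sum of basis elements $X^{\mathbf{c}}$ with coefficients $q^{-\frac12 d_{\mathbf{e}}^{V(m)}}|Gr_{\mathbf{e}}(V(m))|$ lying in $\mathbb{N}[q^{\pm1/2}]$. Multiplication in the based quantum torus preserves this positivity in the basis $\{X^{\mathbf{c}}\}$, since $X^{\mathbf{c}}X^{\mathbf{d}}=q^{\Lambda(\mathbf{c},\mathbf{d})/2}X^{\mathbf{c}+\mathbf{d}}$ and the twist $q^{\Lambda/2}$ itself lies in $\mathbb{N}[q^{\pm1/2}]$. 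Hence every $X_m^aX_{m+1}^b$ is positive, and by the same product argument applied to the positive expansion $z=X^{(-1,1)}+X^{(1,-1)}+X^{(-1,-1)}$ from Lemma \ref{1}, every $z^n$ is positive.

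The main obstacle is positivity of $s_n$ and $z_n$, whose polynomial expressions in $z$ carry negative integer coefficients, so that the Chebyshev recurrences $s_{n+1}=zs_n-s_{n-1}$ and $z_{n+1}=zz_n-z_{n-1}$ do not propagate positivity on their face. I would identify $s_n=X_{R_p(n)}$ for $p\in\mathbb{P}^1_k$ of degree $1$ by induction on $n$: the cases $n\le1$ follow from Lemma \ref{1} (with the convention $R_p(0)=0$), and the inductive step matches the second-kind Chebyshev recurrence against the representation-theoretic multiplication formula $X_\delta\cdot X_{R_p(n)}=X_{R_p(n+1)}+X_{R_p(n-1)}$, which one verifies in \eqref{eq:XV rank 2} using the submodule lattice of the homogeneous tube at $p$. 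Positivity of $s_n$ is then immediate from the $q$-CC formula. For $z_n=s_n-s_{n-2}=X_{R_p(n)}-X_{R_p(n-2)}$ (with $n\ge2$), one checks from the exponent identification $(f_1,f_2)\leftrightarrow(f_1+1,f_2+1)$ between Caldero-Chapoton expansions of $X_{R_p(n-2)}$ and $X_{R_p(n)}$ that each Laurent monomial arising in $X_{R_p(n-2)}$ appears in $X_{R_p(n)}$ and that the corresponding Grassmannian count of $R_p(n)$ at the shifted dimension dominates the count of $R_p(n-2)$ in $\mathbb{N}[q^{\pm1/2}]$; this combinatorial bookkeeping is the most delicate part of the argument.
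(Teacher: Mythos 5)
Your reduction via $\sigma_1$ to the single cluster $\{X_1,X_2\}$ is exactly the paper's first step, and your treatment of the cluster monomials $X_m^aX_{m+1}^b$ and of $z^n$ (positivity of the $q$-Caldero--Chapoton coefficients plus stability of positivity under multiplication in the quantum torus) is sound, although it quietly relies on knowing that the Grassmannian cardinalities $|Gr_{\bf e}(V(m))|$ are polynomials in $q$ with \emph{nonnegative} coefficients --- a fact that comes from the explicit expansions of \cite{Szanto}, \cite{lampe}, \cite{rupel} and that the paper's closing Remark deliberately avoids invoking.

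The genuine gap is in the only hard case, the elements $z_n$ (and $s_n$). You reduce their positivity to two claims that you do not establish: (i) the quantum multiplication formula $X_\delta X_{R_p(n)}=X_{R_p(n+1)}+X_{R_p(n-1)}$, asserted to be verifiable ``using the submodule lattice of the homogeneous tube''; and (ii) the termwise domination of the Grassmannian counts of $R_p(n-2)$ by those of $R_p(n)$ at shifted dimension vectors, needed to make $z_n=X_{R_p(n)}-X_{R_p(n-2)}$ positive despite the minus sign. Claim (i) is a quantum cluster multiplication theorem inside the tube: in the quantum setting the $q$-powers $q^{-\frac12 d^V_{\bf e}}$ do not obviously match up across the short exact sequences relating submodules of $R_p(1)\oplus R_p(n)$ to those of $R_p(n\pm1)$, and the paper itself only obtains $s_n=X_{R_p(n)}$ in Section 4 by a completely different route (Lemma \ref{10} combined with Sz\'ant\'o's counting formula, Theorem \ref{12}, and a $q$-binomial identity). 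Claim (ii) is precisely the Sherman--Zelevinsky-type positivity that makes this corollary nontrivial; calling it ``combinatorial bookkeeping'' does not discharge it, since carrying it out amounts to proving inequalities between products of Gaussian binomial coefficients. The paper sidesteps both issues: it first reduces $\mathcal{S}$ and $\mathcal{D}$ to $\mathcal{B}$ (using $s_n=z_n+z_{n-2}+\cdots$, the positive direction of $z_n=s_n-s_{n-2}$), then checks positivity of the finite seed $\{X_{-2},\dots,X_2,z_1,z_2\}$ directly, and bootstraps: the identities of Proposition \ref{5} write each product $X_1z_m$, $X_1X_{1\pm 2m}$, $X_1X_{2+2m}$, $X_1X_{-2m}$ as a \emph{manifestly positive} $\mathbb{N}[q^{\pm\frac12}]$-combination of lower-index $X_k$'s and $z_k$'s, so dividing by the Laurent monomial $X_1$ propagates positivity of the $X_n$ and the $z_n$ simultaneously, with no Grassmannian counting at all. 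To repair your argument you would either need to prove (i) and (ii) in full, or adopt an induction of the paper's type in which the $z_n$ are produced from already-positive expressions rather than as differences.
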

\begin{proof}
By Lemma \ref{1} and the fact $z_{n}(x)=s_{n}(x)-s_{n-2}(x)$, we
only need to prove every element in  $\mathcal{B}$ is positive. By
the definition of  $\sigma_{1}$ and Remark \ref{9}, it is enough to
prove the positivity in $\{x_1,x_{2}\}.$  We prove it by induction.
For convenience, we write down the following equations according to
Proposition \ref{5} and Remark \ref{8}:\\
 For $m\geq 1:$
\begin{eqnarray}
    X_{1}z_{m} &=&  q^{\frac{m}{2}}X_{1+m}+q^{-\frac{m}{2}}X_{1-m}.
\end{eqnarray}
For $m\geq 0:$
\begin{eqnarray}
   X_{1}X_{1+2m} &=& q^{m}X^{2}_{1+m}+q^{-m+1}\sum_{k=1}^{m}{z_{2(m-k)}}+q^{-m+3}\sum_{k=2}^{m}{z_{2(m-k)}}\\
   &&+\cdots+q^{m-3}\sum_{k=m-1}^{m}{z_{2(m-k)}}+q^{m-1};\nonumber\\
   X_{1}X_{2+2m} &=& q^{m}X_{1+m}X_{2+m}+q^{-m+\frac{1}{2}}\sum_{k=1}^{m}{z_{2(m-k)+1}}+q^{-m+\frac{5}{2}}\sum_{k=2}^{m}{z_{2(m-k)+1}} \\
&&+\cdots
+q^{m-\frac{7}{2}}\sum_{k=m-1}^{m}{z_{2(m-k)+1}}+q^{m-\frac{3}{2}}z_1.\nonumber
\end{eqnarray}

\begin{eqnarray}
   X_{1}X_{1-2m} &=& q^{-m}X^{2}_{1-m}+q^{m-1}\sum_{k=1}^{m}{z_{2(m-k)}}+q^{m-3}\sum_{k=2}^{m}{z_{2(m-k)}}\\
   &&+\cdots+q^{-m+3}\sum_{k=m-1}^{m}{z_{2(m-k)}}+q^{-m+1};\nonumber\\
   X_{1}X_{-2m} &=& q^{-m-1}X_{-m}X_{1-m}+q^{m-\frac{1}{2}}\sum_{k=1}^{m}{z_{2(m-k)+1}}+q^{m-\frac{5}{2}}\sum_{k=2}^{m}{z_{2(m-k)+1}} \\
&&+\cdots
+q^{-m+\frac{7}{2}}\sum_{k=m-1}^{m}{z_{2(m-k)+1}}+q^{-m+\frac{3}{2}}z_1.\nonumber
\end{eqnarray}
It is easy to check that
$$\{X_{-2},X_{-1},X_{0},X_{1},X_{2},z_1,z_2\}$$ are positive
elements in $\{x_1,x_{2}\}.$ Now assume that
$$\{X_{-2m},X_{-2m+1},\cdots,X_{2m-1},X_{2m},z_{1},\cdots,z_{2m}\}$$ are
positive elements in $\{x_1,x_{2}\}.$ Then by (3.2), (3.3) and
(3.4), we know that $X_{2m+1},X_{2m+2}$  and $X_{-1-2m}$ are
positive. Thus we obtain that $z_{2m+1}$ is positive by (3.1).
Therefore by (3.5), we have that $X_{-2m-2}$ is positive. Again by
(3.2), we know that $X_{2m+3}$ is positive. Thus we get $z_{2m+2}$
is positive by (3.1) again. Throughout the above discussions we
obtain
$$\{X_{-2m-2},X_{-2m-1},\cdots,X_{2m+1},X_{2m+2},z_{1},\cdots,z_{2m+2}\}$$
are positive elements in $\{x_1,x_{2}\}.$ The proof is finished.
\end{proof}
\begin{remark}
In fact, by \cite{Szanto}\cite{lampe}\cite{rupel}, the positivity in
the cluster variables is obvious, then applying the equation $
X_{1}z_{m}=q^{\frac{m}{2}}X_{1+m}+q^{-\frac{m}{2}}X_{1-m}$, we can
deduce the positivity in the elements $z_{m}$ for any
$m\in\mathbb{N}$. Here, we give an alternative proof without needing
the explicit expansions of cluster variables.
\end{remark}

\begin{theorem}
The sets $\mathcal{B},\mathcal{S}$ and $\mathcal{D}$ are
$\mathbb{Z}[q^{\pm \frac{1}{2}}]-$bases of the quantum cluster
algebra $\Acal_q(2,2)$.
\end{theorem}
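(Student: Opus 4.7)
The plan is to handle $\mathcal{B}$ first, establishing both linear independence and spanning, and then reduce $\mathcal{S}$ and $\mathcal{D}$ to $\mathcal{B}$ by a unitriangular change of basis on the Chebyshev part.

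For linear independence of $\mathcal{B}$, I would pass to the ambient quantum torus $\mathcal{T}$ and invoke the leading-term analysis of Remark~\ref{6} and Remark~\ref{7}. Each $b\in\mathcal{B}$ has a unique $\preceq$-minimal Laurent term $f(q^{1/2},q^{-1/2})X^{\mathbf{c}(b)}$ whose coefficient $f$ is a unit in $\mathbb{Z}[q^{\pm 1/2}]$, and Remark~\ref{7}(2) shows that the assignment $b\mapsto\mathbf{c}(b)$ is injective on $\mathcal{B}$. From a hypothetical relation $\sum_b\alpha_b b=0$, pick $b^*$ with $\alpha_{b^*}\ne 0$ such that $\mathbf{c}(b^*)$ is $\preceq$-minimal among $\{\mathbf{c}(b):\alpha_b\ne 0\}$; no other basis element contributes to the $X^{\mathbf{c}(b^*)}$-coefficient of the sum, which therefore equals $\alpha_{b^*}f\ne 0$, a contradiction.

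For spanning, I would show that every ordered monomial $X_{n_1}X_{n_2}\cdots X_{n_r}$ in the cluster variables lies in the $\mathbb{Z}[q^{\pm 1/2}]$-span $\mathcal{A}'$ of $\mathcal{B}$; since $\{X_m:m\in\mathbb{Z}\}$ generates $\mathcal{A}_q(2,2)$ as an algebra, this gives $\mathcal{A}'=\mathcal{A}_q(2,2)$. The reduction procedure uses Proposition~\ref{5} in four moves: (a) whenever two adjacent factors $X_{n_i}X_{n_{i+1}}$ satisfy $|n_i-n_{i+1}|\ge 2$, rewrite their product via Proposition~\ref{5}(3) as a sum of $X_l^aX_{l+1}^b$ and $z_j$ terms; (b) absorb any $z_j$ sitting next to some $X_n$ by Proposition~\ref{5}(2); (c) collapse products $z_jz_k$ by Proposition~\ref{5}(1); (d) normalize runs of factors with indices in $\{m,m+1\}$ into $q^?X_m^aX_{m+1}^b$ using the adjacent-cluster quasi-commutation $X_mX_{m+1}=qX_{m+1}X_m$. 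A double induction on $r$ and on the spread $\max_i n_i-\min_i n_i$ then shows the procedure terminates with an expression in $\mathcal{A}'$.

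The cases of $\mathcal{S}$ and $\mathcal{D}$ are formal corollaries. The relation $z_n=s_n-s_{n-2}$ gives $s_n=z_n+z_{n-2}+z_{n-4}+\cdots$, so $\{z_n\}_{n\in\mathbb{N}}$ and $\{s_n\}_{n\in\mathbb{N}}$ are related by a unitriangular change of basis in $\mathbb{Z}[q^{\pm 1/2}][X_\delta]$; since $z_n=F_n(z)$ is monic of degree $n$ in $z$, the change between $\{z_n\}$ and $\{z^n\}$ is also unitriangular. Because the cluster-monomial parts of $\mathcal{B},\mathcal{S},\mathcal{D}$ agree, replacing the $\{z_n\}$ part by $\{s_n\}$ or $\{z^n\}$ preserves the basis property. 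The main obstacle I anticipate is the rigorous termination argument in the spanning step: in the quantum setting $X_m$ and $X_{m+k}$ with $k\ge 2$ do not quasi-commute, so one cannot freely permute factors before invoking Proposition~\ref{5}(3), and the induction measure has to be chosen so that every rewrite---including those triggered deep inside longer products by move (b)---strictly decreases it.
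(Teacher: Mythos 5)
Your proposal is correct and follows essentially the same route as the paper: linear independence via the $\preceq$-minimal Laurent terms of Remarks \ref{6} and \ref{7}, spanning via the straightening relations of Proposition \ref{5}, and reduction of $\mathcal{S}$ and $\mathcal{D}$ to $\mathcal{B}$ by unitriangular substitutions among $\{z_n\}$, $\{s_n\}$, $\{z^n\}$. The only difference is one of detail: the paper disposes of the spanning step in a single sentence, whereas you spell out the rewriting moves and correctly flag the choice of a decreasing induction measure as the point requiring care.
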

\begin{proof}
Note that if $\mathcal{B}$ is a $\mathbb{Z}[q^{\pm
\frac{1}{2}}]-$basis of the quantum cluster algebra $\Acal_q(2,2)$,
then $\mathcal{S}$ and $\mathcal{D}$ are naturally
$\mathbb{Z}[q^{\pm \frac{1}{2}}]-$bases of quantum cluster algebra
$\Acal_q(2,2)$ because there are have unipotent matrix
transformations between $\{z_n\mid n\in \mathbb{N}\}$, $\{s_n\mid
n\in \mathbb{N}\}$ and $\{z^n\mid n\in \mathbb{N}\}$. In the
following, we will focus on the set $\mathcal{B}$ and prove it is a
$\mathbb{Z}[q^{\pm \frac{1}{2}}]-$basis of the quantum cluster
algebra $\Acal_q(2,2)$.

 By
Proposition \ref{5}, we obtain that any element  of the quantum
cluster algebra $\Acal_q(2,2)$ can be a $\mathbb{Z}[q^{\pm
\frac{1}{2}}]-$combination of the elemnets in the set $\mathcal{B}$.
Thus we only need to prove the elemnets in  $\mathcal{B}$ are
$\mathbb{Z}[q^{\pm \frac{1}{2}}]-$independent.

By Remark \ref{7}, we know that the elements $\textbf{c}$ associated
to  these minimal non-zero terms $f(q^{\frac{1}{2}},
q^{-\frac{1}{2}})X^{\textbf{c}}$ in the expansion of the elements in
the set $\mathcal{B}$ are different from  each other. Now we suppose
that a finite $\mathbb{Z}[q^{\pm \frac{1}{2}}]-$combination of the
elemnets in the set $\mathcal{B}$ is equal to $0.$ Let $S\subset
\mathbb{Z}^{2}$ be the set of all $\alpha$ such that the
corresponding element occurs with a non-zero coefficient in this
$\mathbb{Z}[q^{\pm \frac{1}{2}}]-$combination. If $S$ is non-empty,
pick a minimal element $\alpha\in S$, by Remark \ref{6} and Remark
\ref{7}, we know that $X^{\alpha}$ does not occur in the expansion
of any other element in above equation which gives a contradiction.
This completes the proof of the theorem.
\end{proof}

Set
$$\mathcal{B'}=\{q^{-\frac{ab}{2}}X^{a}_{m}X^{b}_{m+1}|m\in \mathbb{Z}, (a,b)\in \mathbb{Z}^{2}_{\geq
0}\}\cup \{z_{n}|n\in \mathbb{N}\}$$
$$\mathcal{S'}=\{q^{-\frac{ab}{2}}X^{a}_{m}X^{b}_{m+1}|m\in \mathbb{Z},(a,b)\in
\mathbb{Z}^{2}_{\geq 0}\}\cup \{s_{n}|n\in \mathbb{N}\}$$
$$\mathcal{D'}=\{q^{-\frac{ab}{2}}X^{a}_{m}X^{b}_{m+1}|m\in \mathbb{Z},(a,b)\in
\mathbb{Z}^{2}_{\geq 0}\}\cup \{z^{n}|n\in \mathbb{N}\}.$$ Then we
can obtain the following corollary.
\begin{corollary}
The sets $\mathcal{B'},\mathcal{S'}$ and $\mathcal{D'}$ are
bar-invariant $\mathbb{Z}[q^{\pm \frac{1}{2}}]-$bases of the quantum
cluster algebra $\Acal_q(2,2)$.
\end{corollary}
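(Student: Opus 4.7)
The plan is to reduce the corollary to the preceding theorem plus a short bar-invariance check. First I observe that $\mathcal{B}'$, $\mathcal{S}'$, $\mathcal{D}'$ are obtained from $\mathcal{B}$, $\mathcal{S}$, $\mathcal{D}$ by rescaling each monomial $X_m^a X_{m+1}^b$ by the unit $q^{-ab/2}\in\ZZ[q^{\pm 1/2}]$, while leaving the elements $z_n$, $s_n$, $z^n$ untouched. The transition matrix is therefore diagonal with invertible entries, so the basis property of $\mathcal{B}'$, $\mathcal{S}'$, $\mathcal{D}'$ is an immediate consequence of the theorem just proved.

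The real content is bar-invariance. For the elements $z_n$, $s_n$, $z^n$, this is already noted in Remark \ref{9} (as a consequence of Lemma \ref{4} and the fact that $\overline{XY}=\overline{Y}\,\overline{X}$). The remaining task is to show that $\overline{q^{-ab/2} X_m^a X_{m+1}^b}= q^{-ab/2} X_m^a X_{m+1}^b$ for every $m\in\ZZ$ and $(a,b)\in\ZZ_{\ge 0}^2$.

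The key lemma I need is the $q$-commutation relation
$$X_m X_{m+1} = q\, X_{m+1} X_m \qquad (m\in\ZZ).$$
For $m=1$ this is the defining relation of the ambient torus $\Tcal$. For general $m$, I would propagate it by applying the automorphism $\sigma_1$ (and its inverse) iteratively: since $\sigma_1(X_m)=X_{m+1}$ and $\sigma_1(q)=q$, the relation at level $m$ pushes forward (and backward) to the relation at level $m\pm 1$. Granting this, the bar-invariance check is a one-liner: since each $X_m$ is bar-invariant and the bar-involution is antimultiplicative,
$$\overline{X_m^a X_{m+1}^b} \;=\; \overline{X_{m+1}^b}\,\overline{X_m^a} \;=\; X_{m+1}^b X_m^a \;=\; q^{-ab} X_m^a X_{m+1}^b,$$
and multiplying both sides by $\overline{q^{-ab/2}}=q^{ab/2}$ gives exactly $\overline{q^{-ab/2} X_m^a X_{m+1}^b}=q^{-ab/2} X_m^a X_{m+1}^b$, as required.

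The only point requiring any argument is the $q$-commutation $X_m X_{m+1} = q X_{m+1} X_m$ for all $m$; alternatively, one can read this off from the fact that each pair $\{X_m,X_{m+1}\}$ is a cluster obtained by mutation and hence generates a based quantum torus of the same form as the initial one. Either route is routine, and once the relation is in hand the bar-invariance — and therefore the corollary — follows immediately.
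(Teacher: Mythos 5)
Your proposal is correct and is essentially the argument the paper intends (the paper in fact omits the proof entirely, presenting the corollary as immediate from the theorem): the primed sets differ from $\mathcal{B},\mathcal{S},\mathcal{D}$ only by the units $q^{-ab/2}$, so the basis property carries over, and the normalization is exactly what makes the cluster monomials bar-invariant via $X_mX_{m+1}=qX_{m+1}X_m$ (propagated from the initial cluster by $\sigma_1$) together with the bar-invariance of each $X_m$ and of $z_n$, $s_n$ from Remark \ref{9}. The only detail worth adding is the one-line observation that $\overline{z^n}=\overline{z}^{\,n}=z^n$, since Remark \ref{9} only records $z_n$ and $s_n$ explicitly.
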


\section{An representation-theoretic interpretation of the element $s_n$}
 Recall we denote by $R_p(n)$ the indecomposable regular
modules with dimension vector $(n, n)$ for $n\geq 1$ and some $p\in
\mathbb{P}^1_k$ of degree 1. In this section, we will prove that
$s_n$ is equal to $X_{n\delta}$ for every $n \in \mathbb{N}.$
 The following proposition shows the Laurent expansion of $X_m$ in
 $A_q(2,2)$.
\begin{proposition}(\cite{lampe},\cite{rupel})\label{11}
For every $n \geq 0$, we have
\begin{eqnarray}
&\label{eq:X-n-formula}X_{-n} &\textstyle  =  X^{(n+2,-n-1)} +
\sum_{p + r \leq
n} {n-r \brack p}_q{n+1-p \brack r}_q X^{(2r-n,2p-n-1)};\\
&\label{eq:Xn-formula}X_{n+3} &\textstyle  =  X^{(-n-1,n+2)} +
\sum_{p + r \leq n} {n-r \brack p}_q{n+1-p \brack r}_q
X^{(2p-n-1,2r-n)}.
\end{eqnarray}
\end{proposition}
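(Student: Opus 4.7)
The plan is to apply the $q$-deformation of the Caldero--Chapoton formula \eqref{eq:XV rank 2} to the preprojective module $V(-n)=M(n+1)$ (for the first identity) and the preinjective module $V(n+3)=N(n+1)$ (for the second), using Theorem \ref{rank2} to identify the relevant cluster variables with these CC-elements. Since $\dimb M(n+1)=(n,n+1)$, formula \eqref{eq:XV rank 2} specialises to
\[
X_{-n}=\sum_{\mathbf{e}}q^{-\frac12 d_{\mathbf{e}}^V}\bigl|Gr_{\mathbf{e}}(M(n+1))\bigr|\,X^{(n+2-2e_2,\,2e_1-n-1)},
\]
so the task reduces to enumerating sub-representations of $M(n+1)$ of every dimension vector $\mathbf{e}=(e_1,e_2)$ and tracking the resulting $q$-exponents.

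First I would realize $M(n+1)$ concretely as $k^n\rightrightarrows k^{n+1}$, with the two maps $\alpha,\beta$ given by the inclusions skipping the last, respectively the first, basis vector --- the standard presentation of the $(n+1)$-st preprojective. A sub-representation with $\dim W_1=e_1$, $\dim W_2=e_2$ amounts to the choice of a $W_1\in Gr_{e_1}(k^n)$ together with a $W_2\in Gr_{e_2}(k^{n+1})$ containing $\alpha(W_1)+\beta(W_1)$. Setting $p=e_1$ and $r=n+1-e_2$ turns the monomial $X^{(n+2-2e_2,\,2e_1-n-1)}$ into the advertised $X^{(2r-n,\,2p-n-1)}$; the zero sub-representation $(e_1,e_2)=(0,0)$ produces $r=n+1$, which sits outside the range $p+r\le n$ and so contributes the isolated leading term $X^{(n+2,-n-1)}$ displayed outside the sum.

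The combinatorial core is then to prove
\[
q^{-\frac12 d_{\mathbf{e}}^V}\bigl|Gr_{(p,\,n+1-r)}(M(n+1))\bigr|={n-r\brack p}_q{n+1-p\brack r}_q,
\]
which I would establish by stratifying $Gr_p(k^n)$ by the dimension $d$ of $\alpha(W_1)+\beta(W_1)$, counting the $W_2$-extensions on each stratum with an ordinary Gaussian binomial over $\mathbb{F}_{q^2}$, and collapsing the resulting double sum via standard $q$-Pascal identities. The second formula then follows by running the same argument with $N(n+1)$ in place of $M(n+1)$: the switch of dimension vectors $(n,n+1)\leftrightarrow(n+1,n)$ interchanges the two coordinates of every exponent in \eqref{eq:XV rank 2}, producing precisely the $(a,b)\leftrightarrow(b,a)$-transposition visible between the two displayed expressions.

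The main obstacle is this $q$-binomial identity: the submodule count for $M(n+1)$ is a delicate Gaussian computation, and verifying that the prefactor $q^{-\frac12 d_{\mathbf{e}}^V}\cdot|Gr_{\mathbf{e}}|$ collapses exactly to ${n-r\brack p}_q{n+1-p\brack r}_q$ demands careful bookkeeping when converting between the ordinary Gaussian binomial over $\mathbb{F}_{q^2}$ and the symmetric ${m\brack k}_q$ convention used throughout the paper, together with absorbing the $q$-shifts coming from $d_{\mathbf{e}}^V$.
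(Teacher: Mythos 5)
First, a point of comparison: the paper does not prove Proposition \ref{11} at all --- it is imported from \cite{lampe} and \cite{rupel} --- so there is no internal proof to measure your attempt against. Your setup is nonetheless correct and consistent with the paper's framework: by Theorem \ref{rank2}, $X_{-n}=X_{V(-n)}=X_{M(n+1)}$ with $\dimb M(n+1)=(n,n+1)$; the substitution $p=e_1$, $r=n+1-e_2$ does convert the exponent $(n+2-2e_2,\,2e_1-n-1)$ of \eqref{eq:XV rank 2} into $(2r-n,\,2p-n-1)$; the zero subrepresentation produces exactly the isolated term $X^{(n+2,-n-1)}$; and the support condition $p+r\le n$ matches the fact that a nonzero subrepresentation of a preprojective module must satisfy $e_2\ge e_1+1$. (For $n=1$ this reproduces the expansion of $X_{-1}$ displayed in the proof of Lemma \ref{2}.)

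The genuine gap is that the entire content of the proposition is concentrated in the single identity
\begin{equation*}
q^{-\frac12 d_{\mathbf{e}}^{V}}\bigl|Gr_{(p,\,n+1-r)}(M(n+1))\bigr|={n-r\brack p}_q{n+1-p\brack r}_q,
\end{equation*}
and you only announce a strategy for it (stratify by $\dim(\alpha(W_1)+\beta(W_1))$ and collapse a double sum of Gaussian binomials over the field with $q^2$ elements) without carrying it out. This is not routine bookkeeping: one must first produce a closed-form count of the quiver Grassmannian of a preprojective module, and then verify that the factor $q^{-\frac12 d_{\mathbf{e}}^{V}}$, with $d_{\mathbf{e}}^{V}=2e_1(v_1-e_1)-2(2e_1-e_2)(v_2-e_2)$, converts the two ordinary Gaussian binomials ${\cdot\choose\cdot}_{q^2}$ into the two symmetric brackets ${\cdot\brack\cdot}_q$ with no leftover power of $q$ --- precisely the delicate step the paper itself performs for the regular module $R_p(n)$ in Section 4, where it can lean on Theorem \ref{12}. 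For the preprojective module no such counting theorem is quoted in the paper, so your argument would have to prove it from scratch. A cleaner self-contained alternative, entirely within the paper's toolkit, is to verify the formulas for $n=0,1$ directly and induct on $n$ via the three-term relation $X_{-n-1}=q^{1/2}X_{-n}z_{1}-qX_{-n+1}$ from Lemma \ref{3}; the inductive step then reduces to the $q$-Pascal identity ${n+1-p\brack r}_q q^{r}={n-p\brack r}_q+{n-p\brack r-1}_q q^{n-p+1}$ that the paper already uses in Section 4, and it avoids counting subrepresentations altogether.
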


\begin{lemma}\label{10}
For  every $n \in \mathbb{N}$, we have
$$s_n=q^{\frac{n}{2}}X_{1}X_{n+3}-q^{\frac{n}{2}+1}X_{2}X_{n+2}.$$
\end{lemma}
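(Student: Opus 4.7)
My plan is to prove Lemma \ref{10} by induction on $n$, using the Chebyshev recurrence $s_{n+1}=s_nz-s_{n-1}$ (valid for $n\geq 1$) together with Lemma \ref{3}, which converts products $X_k z$ into linear combinations of $X_{k\pm 1}$. Set
\[T_n := q^{n/2}X_1X_{n+3}-q^{n/2+1}X_2X_{n+2},\]
so that the claim is $T_n=s_n$ for all $n\in\mathbb{N}$.

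For the base cases, I would check $T_0=X_1X_3-qX_2^2=1=s_0$ directly from the exchange relation \eqref{recurrence}, and $T_1=z=s_1$ by using the identity $X_1X_4=qX_2X_3+q^{-1/2}z$ (obtained from \eqref{imaginary} via the automorphism $\sigma_1$, as in the proof of Proposition \ref{5}(3) for $m=1$). These short computations anchor the induction.

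For the inductive step, assuming $T_k=s_k$ for $k\leq n$ with $n\geq 1$, I would compute $T_n\cdot z$ by right-multiplication, applying Lemma \ref{3} to both $X_{n+3}z$ and $X_{n+2}z$:
\[X_{n+3}z=q^{1/2}X_{n+4}+q^{-1/2}X_{n+2},\qquad X_{n+2}z=q^{1/2}X_{n+3}+q^{-1/2}X_{n+1}.\]
Substituting and collecting terms by powers of $q$ splits the result cleanly as $T_n z=T_{n+1}+T_{n-1}$. Since $s_n\in\mathbb{Z}[z]$ commutes with $z$, the Chebyshev recurrence gives $s_{n+1}=s_nz-s_{n-1}$, and combining with the inductive hypothesis yields $T_{n+1}=T_nz-T_{n-1}=s_nz-s_{n-1}=s_{n+1}$, closing the induction.

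There is no real obstacle: the one subtlety is that $z$ does not commute with the $X_k$, so the substitutions via Lemma \ref{3} must be carried out on the correct side (here, right multiplication). The proof is essentially a two-line verification once the base cases and Lemma \ref{3} are in place.
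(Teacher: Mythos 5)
Your proof is correct and follows essentially the same route as the paper: both arguments induct on $n$ via the Chebyshev recurrence $s_{n+1}=s_nz-s_{n-1}$ and use Lemma \ref{3} to expand $X_{n+3}z$ and $X_{n+2}z$ on the right. Your reformulation as $T_nz=T_{n+1}+T_{n-1}$ and the explicit check of the $n=0$ base case are minor organizational differences, not a different method.
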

\begin{proof}
It is easy to check that
$s_1=q^{\frac{1}{2}}X_{1}X_{4}-q^{\frac{3}{2}}X_{2}X_{3}.$ Assume
that it holds for $n\leq k,$ then
\begin{eqnarray}
  s_{k+1}  &=& s_ks_1-s_{k-1}\nonumber\\
  &=& (q^{\frac{k}{2}}X_{1}X_{k+3}-q^{\frac{k}{2}+1}X_{2}X_{k+2})s_1-(q^{\frac{k-1}{2}}X_{1}X_{k+2}-q^{\frac{k+1}{2}}X_{2}X_{k+1})\nonumber\\
 &=& q^{\frac{k}{2}}X_{1}(q^{\frac{1}{2}}X_{k+4}+q^{-\frac{1}{2}}X_{k+2})-q^{\frac{k}{2}+1}X_{2}(q^{\frac{1}{2}}X_{k+3}+q^{-\frac{1}{2}}X_{k+1}) \\
&-&(q^{\frac{k-1}{2}}X_{1}X_{k+2}-q^{\frac{k+1}{2}}X_{2}X_{k+1})\nonumber\\
 &=&   q^{\frac{k+1}{2}}X_{1}X_{k+4}-q^{\frac{k+1}{2}+1}X_{2}X_{k+3}.\nonumber
\end{eqnarray}

\end{proof}

Denote the quantum binomial coefficients ${n\choose
r}_q=\frac{(q^n-1)(q^{n-1}-1)\cdots(q^{n-r+1}-1)}{(q^r-1)\cdots(q-1)}$
and take ${n\choose0}_q=1$ for any $n\in\ZZ$, ${n\choose l}_q=0$ for
any $l<0,$ and ${n\choose l}_q=0$ for any $0\leq n<l.$ Then we have
the following theorem proved in \cite{Szanto}:

\begin{theorem}\cite[Theorem 4.6]{Szanto}\label{12}
Let $\textbf{e}=(a, b)$ for $(a, b)\in \mathbb{Z}^2_{\geq 0}$. Then
for $n\geq 1,$ $$|Gr_{\textbf{e}}(R_p(n))|={n-a \choose
n-b}_{q^2}{b\choose a}_{q^2}.$$
\end{theorem}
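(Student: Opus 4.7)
The plan is to realize $R_p(n)$ concretely, convert the problem into a two-stage count of pairs of subspaces, and reduce to a $q$-Chu--Vandermonde identity.  Up to isomorphism $R_p(n)$ is the Kronecker representation on $V_1 = V_2 = V := k^n$ with $\alpha = \mathrm{id}_V$ and $\beta$ a Jordan block of eigenvalue $p$.  Since $W_1 \subseteq W_2$ already forces $pW_1 \subseteq W_2$, the subrepresentation condition $\beta W_1 \subseteq W_2$ is insensitive to replacing $\beta$ by $\beta - p\cdot\mathrm{id}$, so I may assume $\beta = N$ is the standard nilpotent Jordan block; equivalently $V = k[x]/(x^n)$ with $N$ acting as multiplication by $x$.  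A subrepresentation of dimension $\mathbf{e} = (a,b)$ then corresponds to a pair $(W_1, W_2)$ of subspaces of $V$ with $\dim W_1 = a$, $\dim W_2 = b$, $W_1 \subseteq W_2$, and $NW_1 \subseteq W_2$.

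For fixed $W_2$, the admissible $W_1$ are the $a$-dimensional subspaces of $W_2 \cap N^{-1}(W_2)$.  Using $\dim N^{-1}(W_2) = \dim(W_2 \cap \mathrm{im}\,N) + \dim \ker N$ one computes $\dim(W_2 \cap N^{-1}(W_2)) = 2b - \dim(W_2 + NW_2)$.  Introducing the defect $c := \dim(W_2 + NW_2) - b \in \{0, \ldots, \min(b, n-b)\}$, the inner count is $\binom{b-c}{a}_{q^2}$, so
\[
|Gr_{\mathbf{e}}(R_p(n))| = \sum_{c \geq 0} N_c \binom{b-c}{a}_{q^2}, \qquad N_c := \#\{W_2 \in Gr(b, V) : \dim(W_2 + NW_2) = b + c\}.
\]

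The combinatorial heart of the argument, and the main obstacle, is the defect distribution
\[
N_c = q^{2c^2}\binom{b}{c}_{q^2}\binom{n-b}{c}_{q^2}.
\]
I would prove this by induction on $n$, stratifying $Gr(b, V)$ by whether $\ker N \subseteq W_2$.  When $\ker N \subseteq W_2$, the quotient $\overline{W_2} := W_2/\ker N \subseteq V/\ker N \cong k[x]/(x^{n-1})$ has dimension $b-1$ and the same defect $c$ (since $\ker N$ lies in $W_2 + NW_2$), contributing $q^{2c^2}\binom{b-1}{c}_{q^2}\binom{n-b}{c}_{q^2}$ by induction.  When $W_2 \cap \ker N = 0$, the image $\overline{W_2}$ has dimension $b$ in $V/\ker N$, and the relation between its defect and $c$ is controlled by whether $x^{n-1} \in W_2 + NW_2$, equivalently whether $\overline{W_2} \supseteq \ker\overline{N}$.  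A short direct analysis of this second case, combined with the $q$-Pascal identity $\binom{b}{c}_{q^2} = \binom{b-1}{c}_{q^2} + q^{2(b-c)}\binom{b-1}{c-1}_{q^2}$, merges the two strata into the claimed closed form.

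Once $N_c$ is known, the identity $\binom{b}{c}_{q^2}\binom{b-c}{a}_{q^2} = \binom{b}{a}_{q^2}\binom{b-a}{c}_{q^2}$ extracts $\binom{b}{a}_{q^2}$ from the sum, and the remaining summation
\[
\sum_{c \geq 0} q^{2c^2}\binom{b-a}{c}_{q^2}\binom{n-b}{c}_{q^2} = \binom{n-a}{b-a}_{q^2} = \binom{n-a}{n-b}_{q^2}
\]
is a form of the $q$-Chu--Vandermonde identity, completing the proof.
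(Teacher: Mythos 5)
First, a point of comparison: the paper does not prove this statement at all --- it is imported verbatim from \cite[Theorem 4.6]{Szanto} --- so your self-contained argument is necessarily a different route. Your global strategy is sound: realizing $R_p(n)$ with $\alpha=\mathrm{id}$ and $\beta$ a Jordan block, reducing to the nilpotent block $N$, counting the admissible $W_1$ inside $W_2\cap N^{-1}(W_2)$ of dimension $2b-\dim(W_2+NW_2)$, and the endgame via the trinomial revision and the $q$-Chu--Vandermonde identity are all correct; the claimed defect distribution $N_c=q^{2c^2}\binom{b}{c}_{q^2}\binom{n-b}{c}_{q^2}$ is also true.

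The gap is in your proof of that distribution, at exactly the step you label ``a short direct analysis.'' Write $\pi:V\to V/\ker N$ for the projection. In the stratum $W_2\cap\ker N=0$ you assert that the condition $x^{n-1}\in W_2+NW_2$ is equivalent to $\overline{W_2}\supseteq\ker\overline N$. Only one implication holds: $\overline{W_2}\supseteq\ker\overline N$ is equivalent to $x^{n-1}\in NW_2$ (note $NW_2=N\pi^{-1}(\overline{W_2})$ depends only on $\overline{W_2}$), which implies $x^{n-1}\in W_2+NW_2$; the converse fails. For $n=4$, $b=2$, take $W_2=\langle 1,\ x+x^3\rangle$: then $\overline{W_2}=\langle 1,x\rangle$ does not contain $\ker\overline N=\langle x^2\rangle$, yet $NW_2=\langle x,x^2\rangle$ and $(x+x^3)-x=x^3\in W_2+NW_2$, so the defect of $W_2$ is $2$ while $\overline{W_2}$ has defect $1$; the sibling lift $W_2'=\langle 1,x\rangle$ of the same $\overline{W_2}$ keeps defect $1$. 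Thus in this stratum the defect of $W_2$ is not a function of $\overline{W_2}$, and your two-case merge does not close as stated. The repair is to count lifts: for $U=\overline{W_2}$ of defect $d$ with $\ker\overline N\not\subseteq U$, exactly $q^{2d}$ of the $q^{2b}$ complements of $\ker N$ in $\pi^{-1}(U)$ satisfy $x^{n-1}\notin W_2+NW_2$ (compare the two natural splittings of $U\cap\overline N U$ back into $V$), and the remaining $q^{2b}-q^{2d}$ gain one unit of defect. Feeding this into the recursion also requires knowing how many $U$ of defect $d$ contain $\ker\overline N$, which is $N_d$ computed for $(n-2,b-1)$ by your Case 1 bijection one level down; so the induction must descend to both $n-1$ and $n-2$. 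With these two amendments the induction does verify the closed form, but as written the key lemma is not proved.
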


\begin{proposition}
$X_{R_p(n)}=s_n$ for $n \in \mathbb{N}$.
\end{proposition}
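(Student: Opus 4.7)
The plan is to prove the identity by computing both sides as Laurent polynomials in the quantum torus basis $\{X^{\textbf{c}} : \textbf{c} \in \mathbb{Z}^2\}$ and matching coefficients of each monomial.

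For the left-hand side, I would start from Lemma~\ref{10}, which expresses $s_n = q^{n/2} X_1 X_{n+3} - q^{n/2+1} X_2 X_{n+2}$, substitute the Laurent expansion \eqref{eq:Xn-formula} from Proposition~\ref{11} for $X_{n+3}$ and also for $X_{n+2}$ (the latter being \eqref{eq:Xn-formula} at $n-1$), and then multiply by $X_1$, $X_2$ using the commutation rules $X_1 X^{(a,b)} = q^{b/2} X^{(a+1,b)}$ and $X_2 X^{(a,b)} = q^{-a/2} X^{(a,b+1)}$. The resulting double sums share the ``leading'' monomial $X^{(-n, n+2)}$, which carries coefficient $q^{n+1}$ in both and therefore cancels. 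After shifting $r \mapsto r+1$ in the second sum so that both sums are indexed by exponents of the form $(2p-n,\,2r-n)$, the bar-invariant $q$-Pascal identity ${n+1-p \brack r}_q = q^{-r}{n-p \brack r}_q + q^{n+1-p-r}{n-p \brack r-1}_q$ collapses the combined coefficient and yields
\[ s_n = \sum_{\substack{p, r \geq 0 \\ p+r \leq n}} {n-r \brack p}_q {n-p \brack r}_q X^{(2p-n,\, 2r-n)}. \]

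For the right-hand side, I apply the $q$-deformed Caldero--Chapoton formula \eqref{eq:XV rank 2} to $V = R_p(n)$ with $\dimb V = (n,n)$, so that the monomial exponent is $(n-2e_2,\,2e_1-n)$; the change of variables $r = e_1$, $p = n - e_2$ makes this match $(2p-n,\,2r-n)$. Theorem~\ref{12} gives $|Gr_{\textbf{e}}(R_p(n))| = {n-r \choose p}_{q^2} {n-p \choose r}_{q^2}$, which is nonzero precisely when $p + r \leq n$, so the two supports agree. A direct calculation from $d^{R_p(n)}_{\textbf{e}} = 2r(n-r) - 2(2r-n+p)p$ gives $-d^{R_p(n)}_{\textbf{e}}/2 = (p+r)(p+r-n)$, and the conversion identity ${n \brack k}_q = q^{-k(n-k)} {n \choose k}_{q^2}$ rewrites the coefficient in the expansion of $s_n$ as exactly $q^{(p+r)(p+r-n)} {n-r \choose p}_{q^2} {n-p \choose r}_{q^2}$, matching $X_{R_p(n)}$ monomial by monomial.

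The main obstacle is bookkeeping rather than conceptual: one must carefully track the skew-commutation phases picked up when multiplying by $X_1$ and $X_2$, verify that the summation ranges truly coincide, and translate correctly between the bar-invariant Gaussian binomial ${n \brack k}_q$ used in Proposition~\ref{11} and the one-sided Gaussian binomial ${n \choose k}_{q^2}$ used in Szántó's formula. Once both exponent computations, $r - p(n-r-p) - r(n-p-r)$ from the $s_n$ side and $-d^{R_p(n)}_{\textbf{e}}/2$ from the Caldero--Chapoton side, are seen to collapse to the common value $(p+r)(p+r-n)$, the identification is immediate.
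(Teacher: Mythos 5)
Your proposal follows essentially the same route as the paper's proof: expand $s_n$ via Lemma \ref{10} together with the Laurent formulas of Proposition \ref{11}, cancel the leading term $q^{n+1}X^{(-n,n+2)}$, collapse the two sums with the $q$-Pascal identity ${n+1-p \brack r}_q q^{r}={n-p \brack r}_q+{n-p \brack r-1}_q q^{n-p+1}$, and match the result against the Caldero--Chapoton expansion of $X_{R_p(n)}$ obtained from Theorem \ref{12} via the conversion ${n\brack k}_q=q^{-k(n-k)}{n\choose k}_{q^2}$. The only blemish is the stray $r$ in your closing exponent ``$r-p(n-r-p)-r(n-p-r)$'': the conversion identity already gives $-p(n-r-p)-r(n-p-r)=(p+r)(p+r-n)$ with no extra term, so the match with $-d^{R_p(n)}_{\mathbf{e}}/2$ holds exactly as in the paper.
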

\begin{proof}
By the $q$-deformation of the Caldero-Chapoton formula and Theorem
\ref{12}, we have
\begin{align*}
X_{R_p(n)} &=\sum_{(a,b)}q^{-\half d_{\bf (a,b)}^{R_p(n)}}|Gr_{(a,b)}(R_p(n))|X^{(n-2b,2a-n)}\\
&=\sum_{(a,b)}q^{(a-b)(n-b)}{n-a\choose
n-b}_{q^2}q^{(a-b)a}{b\choose
a}_{q^2}X^{(n-2b,2a-n)}\\
& =\sum_{(a,b)}{n-a\brack n-b}_q{b\brack a}_qX^{(n-2b,2a-n)}\\
&=\sum_{p + r \leq n}{n-r\brack p}_q{n-p\brack r}_qX^{(2p-n,2r-n)}.
\end{align*}

On the other hand, by Proposition \ref{11}, we have
\begin{align*}
q^{\frac{n}{2}}X_{1}X_{n+3} &=q^{\frac{n}{2}}X_{1}X^{(-n-1,n+2)}
+q^{\frac{n}{2}}X_{1} \sum_{p + r \leq n} {n-r \brack p}_q{n+1-p
\brack r}_q
X^{(2p-n-1,2r-n)}\\
&=q^{n+1}X^{(-n,n+2)} + \sum_{p + r \leq n} {n-r \brack p}_q{n+1-p
\brack r}_q q^{r}X^{(2p-n,2r-n)}.
\end{align*}
And
\begin{align*}
q^{\frac{n}{2}+1}X_{2}X_{n+2} &=q^{\frac{n}{2}+1}X_{2}X^{(-n,n+1)}
+q^{\frac{n}{2}+1}X_{2} \sum_{p + r \leq n-1} {n-1-r \brack p}_q{n-p
\brack r}_q
X^{(2p-n,2r+1-n)}\\
&=q^{n+1}X^{(-n,n+2)} +\sum_{p + r \leq n-1} {n-1-r \brack p}_q{n-p
\brack r}_q
q^{n-p+1}X^{(2p-n,2r+2-n)}\\
&=q^{n+1}X^{(-n,n+2)} +\sum_{p + r \leq n} {n-r \brack p}_q{n-p
\brack r-1}_q q^{n-p+1}X^{(2p-n,2r-n)}.
\end{align*}

Note that it is easy to confirm the following identity:
$${n+1-p \brack r}_q q^{r}={n-p \brack r}_q+{n-p \brack r-1}_q
q^{n-p+1}.$$ Then by Lemma \ref{10}, we have
\begin{eqnarray}
  s_n  &=& q^{\frac{n}{2}}X_{1}X_{n+3}-q^{\frac{n}{2}+1}X_{2}X_{n+2}\nonumber\\
  &=& \sum_{p + r \leq n}{n-r\brack p}_q{n-p\brack
r}_qX^{(2p-n,2r-n)}\nonumber\\
 &=& X_{R_p(n)}.\nonumber
\end{eqnarray}

\end{proof}

\section*{Acknowledgements}
The authors would like to thank Professor Jie Xiao for many helpful
discussions.


\end{document}